\documentclass[12pt,a4paper]{article}

\usepackage{amssymb}
\usepackage{amsmath}
\usepackage{indentfirst}
\usepackage{t1enc}
\usepackage{array}
\usepackage{graphpap}
\usepackage[mathscr]{eucal}
\usepackage{enumerate}
\usepackage{tabularx}
\usepackage{graphicx}
\usepackage[colorlinks=true,linkcolor=blue,citecolor=blue,
  urlcolor=blue]{hyperref}
\usepackage[hyphenbreaks]{breakurl}
\usepackage{amsthm}
\usepackage{scalerel} %Matematikai képletek nagyításához \scaleto parancs
\usepackage{comment}

\usepackage{tikz} 
\usepackage{pgfplots} 
\usetikzlibrary{shapes,arrows}

\usepackage[latin2,utf8]{inputenc}

\begin{document}

\theoremstyle{plain}

%\newtheorem{thm}{\textsc{T\'etel}}[section]
%\newtheorem{thm}{\textsc{T\'etel}}
%\newtheorem{lem}{\textsc{Lemma}}
%\newtheorem{cor}{\textsc{K\"ovetkezm\'eny}}
%\newtheorem{cors}{\textsc{K\"ovetkezm\'enyek}}
%\newtheorem{rmk}{\textsc{Megjegyzés}}
%\newtheorem{dfn}{\textsc{Definíció}}
%\newtheorem{cns}{\textsc{Konstrukció}}
%\newtheorem{all}{\textsc{Állítás}}
%\newtheorem{fel}{\textsc{Feladat}}
%\newtheorem{sejt}{\textsc{Sejtés}}
%\newtheorem{megj}{\textsc{Megjegyzés}}
%\newtheorem{prop}{\textsc{Propozíci\'o}}
%\newtheorem{conj}{\textsc{Sejt\'es}}
%\newtheorem{remark}{\textsc{Megjegyz\'es}}
%\newtheorem{segedt}{\textsc{Segédtétel}}
%\newtheorem{prob}{\textsc{Probléma}}

%Angolban majd:
\theoremstyle{plain}
\newtheorem{thm}{Theorem}
\newtheorem{cor}{Corollary}
\newtheorem{lem}{Lemma}
\newtheorem{dfn}{Definition}
\newtheorem{rmk}{Remark}
\newtheorem{cns}{Construction}
\newtheorem{exm}{Example}
\newtheorem{prs}{Proposition}
\newtheorem{prob}{Problem} 
\newtheorem{ntn}{Notation}
\newtheorem{conj}{Conjecture}
\newtheorem{prop}{Proposition}
\newtheorem{fel}{Excercise}

\newcommand{\gc}{\operatorname{lnko}}
\newcommand{\mc}{\mathcal}
\newcommand{\mr}{\mathscr}
\newcommand{\ab}[1]{\left\vert{#1}\right\vert}
\newcommand{\zj}[1]{\left({#1}\right)}
\newcommand{\norma}{N_{\mathbb F_{q^n}/\mathbb F_q}}
\newcommand{\normawp}{N_{\mathbb F_{p^n}/\mathbb F_p}}
\newcommand{\normfield}{\mc N_{\mathbb F_{q^n}/\mathbb F_q} (f)}

\renewcommand{\baselinestretch}{1.38}
\renewcommand\thefootnote{\relax}

\newcommand{\bs}{\bigskip}
\newcommand{\bi}{\bigskip\noindent}
\newcommand{\red}[1]{\textcolor{red}{#1}}
\newcommand{\lb}[1]{\label{#1}}
\allowdisplaybreaks[1] 

\normalsize

\sloppy

%\numberwithin{equation}{section}
%\setcounter{tocdepth}{1}

%Page 1
\title{Bounded gaps and perfect power gaps in sequences of consecutive primes}

\author{Katalin Gyarmati}
\date{}

\footnotetext{\noindent 2020 Mathematics Subject 
Classification: Primary: 11N05.\\
\indent Keywords and phrases: prime gaps, squares and powers.\\
\indent Research supported by the Hungarian National Research Development and 
Innovation Funds KKP133819.}
%%\thanks{}
\maketitle

\begin{abstract}
  We study whether several consecutive prime gaps can all be
  relatively large at the same time, or is it possible that all are squares
  or perfect powers, or perhaps none of them are squares?
  A few related results and problems are also presented.
\end{abstract}

\section{Introduction}

It has long been conjectured that there are infinitely many twin primes. Building on classical sieving techniques, significant recent progress has been made toward proving this conjecture by Goldston, Pintz, and Yıldırım \cite{Goldston1, Goldston2, Goldston3}, Zhang \cite{Zhang}, and Maynard \cite{Maynard1}.
The most notable result was proved
within the framework of the Polymath
Project \cite{Poly1}, \cite{Poly2} about ten years ago.
Namely, they proved that
\begin{align}
p_{n+1}-p_n \le 246\lb{hez01}
\end{align}
holds infinitely many times, where $p_n$ denotes the $n$-th prime number.

\bigskip It is also interesting to study
the maximum value of $p_{n+1}-p_n$ for primes less than $x$.
This problem was studied, for example, 
by Westzynthius \cite{West}, Erdős \cite{Erdos1}, and
Rankin \cite{Rankin}.
In 1997, \mbox{Erdős \cite{Erdos2} offered a reward
  of \$10,000}
to anyone who could improve on the previous best result,
but this was only achieved in 2016 by Ford, Green, Konyagin, Tao
\cite{Ford0}, and Maynard \cite{Maynard2}, \cite{Maynard3}
(also see the joint paper \cite{Ford2}).
%2. oldal
Here, they proved that
\[
p_{n+1}-p_n >\dfrac{c\ \log n\ \log \log n\ \log \log \log \log n}{\log
\log \log n}
\]
holds for infinitely many integers $n$, where $c > 0$ is an absolute constant.
Consider the following problem: is it possible to find
many large consecutive prime gaps?
Shiu \cite{Shiu}, Cram\'er \cite{Cramer}, Maier \cite{Maier},
and Pintz \cite{Pintz} also worked on this problem.
Finally, Ford, Maynard, and Tao \cite{Ford} proved the sharpest theorem:

\bigskip
\noindent\textbf{Theorem A.} [Ford, Maynard, Tao].
\textit{Let }$k\ge 1$\textit{ be an integer and} 
\[
G_k(x)\stackrel{=}{\textup{def}}\max_{p_{n+k}\le x} \min\{p_{n+1}-p_n,
p_{n+2}-p_{n+1},\dots,p_{n+k}-p_{n+k-1}
\}.
\]
\textit{Then}
\begin{align}
G_k(x) \gg \dfrac{1}{k^2}\dfrac{\log x\log\log x\log\log\log\log
  x}{\log\log\log x},\lb{g01}
\end{align}
\textit{where the implied constant is absolute and effective.}

\bs The following result, Theorem \ref{thm01}, provides a lower bound for $G_k(x)$ that, while staying within the average spacing of $\log x$, offers important insights for larger values of $k$. For the case $k=1$, the result is essentially a consequence of the Prime Number Theorem and could be considered trivial. However, for $k \ge 2$, the estimate is derived using specific bounds from sieve theory. This result sharpen Theorem A only for large $k$.  
  
\begin{thm}\lb{thm01}
  There exists a positive constant $x_0$ such that if $x_0<x\in\mathbb R$
  and $2\le k\in\mathbb N$, then
  \begin{align}
  G_k(x) > 0.1504\phantom{\cdot}\dfrac{\log x}{k}.\lb{g02} 
  \end{align}
\end{thm}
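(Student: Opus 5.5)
The plan is a counting argument: if the statement failed, there would be too many pairs of primes at small distance, more than an upper-bound sieve allows. Put $h=0.1504\,\log x/k$. If $h\le 2$ the claim is trivial, because every gap between odd primes is at least $2$ and we may take $x_0$ large enough that primes beyond $3$ exist below $x$. So we assume $h>2$, which means $k<0.0752\log x$.

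\textbf{Step 1: reduce to counting small gaps.} Consider the primes $p_1<\dots<p_N\le x$, so $N=\pi(x)\sim x/\log x$. Call the gap $p_{n+1}-p_n$ \emph{small} if it is less than $h$. Suppose $G_k(x)\le h$. Then every $k$ consecutive gaps among these primes contain a small one. Split the $N-1$ gaps into $\lfloor (N-1)/k\rfloor$ disjoint blocks of $k$ consecutive gaps. Each block contributes a distinct small gap, so the number of small gaps is at least
\[
\frac{N-1}{k}-1\ge (1-\varepsilon)\frac{x}{k\log x}.
\]
Here we use $k<0.0752\log x$, so the $-1$ is negligible once $x>x_0$.

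\textbf{Step 2: bound small gaps from above.} Every small gap is a pair $(p,p+d)$ of primes $\le x$ with $d$ even and $2\le d<h$. Hence the number of small gaps is at most
\[
\sum_{\substack{2\le d<h\\ 2\mid d}}\#\{p\le x:\ p+d \text{ prime}\}.
\]
To each term we apply the sieve upper bound $\#\{p\le x: p+d\ \text{prime}\}\le (C+o(1))\,\mathfrak S(d)\,x/\log^2 x$. This bound must hold uniformly for even $d\le \log x$, where $\mathfrak S(d)=2C_2\prod_{p\mid d,\,p>2}\frac{p-1}{p-2}$. The constant $C$ should be the best available one: Selberg gives $C=4$, and Chen, Wu and others lower this to about $3.4$. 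We then need the average of the singular series over even $d$:
\[
\sum_{\substack{d<h\\ 2\mid d}}\mathfrak S(d)=h+O(\log h),
\]
which is classical (Gallagher-type). Combining, the number of small gaps is at most $(C+o(1))\,(h+O(\log h))\,x/\log^2x$.

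\textbf{Step 3: compare.} Comparing with Step 1, the assumption $G_k(x)\le h$ forces
\[
h+O(\log h)\ \ge\ \frac{1-\varepsilon}{C}\,\frac{\log x}{k}.
\]
When $h\to\infty$ this contradicts $h=0.1504\log x/k$, provided $0.1504<1/C$; this is where the numerical constant comes from. The exact value $0.1504$ should come out of the best uniform sieve constant, possibly after some care with the factor $2$ in the sum over even $d$. If the first choice of $C$ does not give $0.1504$, I would retune the constant in the sieve input rather than the argument.

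\textbf{Main obstacle.} The hard part is the regime of bounded $h$, i.e.\ $k\asymp\log x$. There the error $O(\log h)$ in the singular-series average is not negligible relative to $h$. For each fixed even $d$ one needs an explicit comparison: either compute $\sum_{d<h}\mathfrak S(d)$ exactly for small $h$, or verify that $\sum_{d<h,\,2\mid d}\mathfrak S(d)\le h$ times a factor still compatible with $0.1504$. This calls for a finite check over small $h$ together with an explicit tail estimate for large $h$. We also need the sieve upper bound to be uniform in $d\le\log x$ with $o(1)$ independent of $d$, which the standard Selberg sieve provides since $d$ is small compared with $x$.
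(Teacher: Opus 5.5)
Your argument is the paper's in dual form. The paper's red primes are exactly the primes whose next prime lies within $r$, i.e.\ your small gaps. Both proofs bound their number by summing the Halberstam--Richert bound over even $d\le r$; your $C=4$ is the paper's Lemma~2, since $8\prod_{p>2}(1-(p-1)^{-2})=4\cdot 2C_2$. Your disjoint-block pigeonhole is equivalent to the paper's count of alternating red and green intervals, and it has the small merit of keeping all $k+1$ primes below $x$.

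The gap is the step you defer as the ``main obstacle'', which is where the constant is actually decided. You need $\sum_{d\le h,\,2\mid d}\mathfrak S(d)\le Ah$ for \emph{every} real $h\ge 2$, with $4A\cdot 0.1504<1$, i.e.\ $A<1/0.6016\approx 1.6622$. The asymptotic $h+O(\log h)$ with an unspecified constant gives no usable $A$ for bounded $h$. A finite check plus an explicit tail would need an explicit error term in the singular-series average, which you neither state nor prove.

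Your guess about the source of $0.1504$ is also off. It is not $1/C$ for an optimal sieve constant: with $C=4$, an $A$ near $1$ would already allow anything below $1/4$. So retuning $C$ is beside the point.

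The paper proves the needed bound elementarily and uniformly in $r$, so there is no separate small-$h$ regime. It writes $\frac{p-1}{p-2}=(1+\frac1p)(1+\frac{2}{(p-2)(p+1)})$ and uses $\prod_{p>2}(1+\frac{2}{(p-2)(p+1)})<e^{0.7272}<2.0693$. Hence for even $h$, $\prod_{2<p\mid h}\frac{p-1}{p-2}<\frac{2.0693}{1.5}\sum_{d\mid h}\mu^2(d)/d$. It then interchanges sums, using $\#\{h\le r:\ 2\mid h,\ d\mid h\}\le r/d$ for even $d$ and $\le r/(2d)$ for odd $d$. This gives $\sum_{h\le r,\,2\mid h}\prod_{2<p\mid h}\frac{p-1}{p-2}<1.3796\,r\bigl(\sum_{2\mid d}\mu^2(d)d^{-2}+\tfrac12\sum_{2\nmid d}\mu^2(d)d^{-2}\bigr)<1.2583\,r$. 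In your normalization that is $A\approx 1.3204\cdot 1.2583\approx 1.661$, so $4A\approx 6.646$ and $0.1504\approx 1/6.646$. The constant is an artifact of this crude averaging, not of the sieve.

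Two minor fixes. Define small gaps by $\le h$, since $G_k(x)\le h$ only yields a gap $\le h$ in each window. Also, $h=2$ is not trivial because the inequality is strict, though your main argument covers it.
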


If $k$ is large in terms of $x$, i.e., 
\[
k\gg \dfrac{\log\log x\log\log\log\log x}{\log\log\log x},
\]
then the estimate \eqref{g02} is sharper than the estimate
\eqref{g01}.

% 3. oldal
\bigskip Today, according to \eqref{hez01}, we know that there are infinitely many consecutive primes whose difference is $\le 246$.
Related to this, I ask the following problem:

\begin{prob}\lb{prob01}
  Is it possible to prove the existence of an absolute constant $C$ for which
  there are infinitely many integers $n$ with
\[
p_{n+1}-p_n < C
\]
and 
\[
p_{n+1}-p_n
\]
is a square?
Is a similar statement true for squarefree numbers in place of squares?
Is such a statement true for primes in place of squares?
\end{prob}

I conjecture that the above problem is true for $C=246^2$,
but I have not been able to prove this. However, a slightly weaker statement
follows immediately from Pintz, Steiger, and Szemerédi theorem \cite{PStSz} 
or Bloom and Maynard's theorem \cite{bloom}.

\begin{thm}\lb{thm02}
  There are infinitely many prime pairs $p,q$ for which $p-q$ is a square.
\end{thm}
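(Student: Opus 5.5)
We derive Theorem \ref{thm02} from the Sárközy-type theorem on square differences in dense sets of integers, in the quantitative form due to Pintz, Steiger and Szemerédi \cite{PStSz}, or in the sharper form of Bloom and Maynard \cite{bloom}. That result says: if $A\subseteq\{1,\dots,N\}$ contains no two distinct elements whose difference is a nonzero perfect square, then
\[
|A|\ll \frac{N}{(\log N)^{c\log\log\log\log N}}
\]
for Pintz--Steiger--Szemerédi, or $|A|\ll N(\log N)^{-c\log\log\log N}$ for Bloom--Maynard, with some absolute constant $c>0$. In either version, for large $N$ the bound is $o(N/\log N)$.

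First we show that at least one pair exists. Take $A=\{p\le N : p \text{ prime}\}$. By the Prime Number Theorem (Chebyshev's bound suffices), $|A|\gg N/\log N$. For large $N$ this exceeds the square-difference-free bound above, since $(\log N)^{c\log\log\log\log N}$ eventually dominates any constant multiple of $\log N$. Hence there are primes $q<p\le N$ with $p-q=m^2$ for some $m\ge1$.

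To get infinitely many pairs, we apply the same argument to $A_N=\{p\le N: p \text{ prime},\ p>M\}$ for an arbitrary fixed $M$. We still have $|A_N|\gg N/\log N$, so $A_N$ contains a pair of primes $q<p$ with $p-q$ a square, and both exceed $M$. Letting $M\to\infty$ produces infinitely many distinct pairs. Alternatively, if only finitely many pairs existed, we could delete one prime from each of them, removing only $O(1)$ elements from the primes up to $N$; the remaining set would be square-difference-free with size $\gg N/\log N$, a contradiction.

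The only step that needs care is the density comparison: the quoted bounds must beat $N/\log N$, and not merely be $o(N)$. Sárközy's original bound $N(\log N)^{-1/3+\varepsilon}$ is too weak for this, which is why the stronger results of \cite{PStSz} or \cite{bloom} are needed. Once those are invoked, the rest is routine, and parity causes no trouble: since $p-q=m^2$ with both primes odd, $m$ is simply even.
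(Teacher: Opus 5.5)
Your proposal is correct and is essentially the paper's proof. The paper also fixes an arbitrary $M$ and applies the Bloom--Maynard bound (noting that Pintz--Steiger--Szemer\'edi would serve equally well) to the primes in $(M,N]$. Their count $\gg N/\log N$ exceeds the square-difference-free threshold for large $N$, so every $M$ is exceeded by some pair of primes whose difference is a square. Your extra remarks, namely the deletion argument and the observation that S\'ark\"ozy's original bound is too weak, are accurate but not needed.
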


Problem \ref{prob01} is related to the famous
Dickson conjecture (see Conjecture \ref{conj01} later).
Surprisingly, if we replace the squares in Problem \ref{prob01}
with perfect powers,
then the statement can be easily proved, following the Banks, Freiberg,
and Turnage-Butterbaugh's theorem \cite{banks}.

\begin{thm}\lb{thm03}
  There are infinitely many integers $n$ for which
  \[
  \scaleto{p_{n+1}-p_n <}{10pt} \scaleto{\ e^{1.834\cdot 10^{76}}}{14pt}
  \]
  and $p_{n+1}-p_n$ is a perfect power. 
\end{thm}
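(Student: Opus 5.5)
The plan is to combine the Banks, Freiberg and Turnage-Butterbaugh theorem \cite{banks} with a purely arithmetic construction of an admissible tuple in which \emph{every} pairwise difference is a perfect power. The form of their result I would use is a Maynard-type statement about consecutive primes: there is an explicit $k_0$ such that for every admissible $k$-tuple $\mathcal H=\{h_1<\dots<h_k\}$ with $k\ge k_0$, there are infinitely many $n$ for which two of the numbers $n+h_i$, $n+h_j$ are \emph{consecutive} primes. If every difference $h_j-h_i$ is a perfect power, then each such pair gives a prime gap $p_{m+1}-p_m=h_j-h_i$ that is a perfect power and is at most $h_k-h_1$. This already proves the qualitative statement, and the explicit bound comes from estimating $h_k-h_1$ for the smallest admissible $k$.

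For the construction I take $h_i=iN$, $i=1,\dots,k$, so the differences are $dN$ with $1\le d\le k-1$. I need $dN$ to be a perfect power for each such $d$. To arrange this, assign to each $d$ its own prime exponent $e_d$, with all $e_d$ distinct and larger than $k$. Then choose $N=\prod_{p\le k}p^{a_p}$ with $a_p\ge 1$, where for every prime $p\le k$ and every $d$ we require
\[
a_p\equiv -v_p(d)\pmod{e_d}.
\]
Because the moduli $e_d$ are distinct primes, the Chinese Remainder Theorem gives a solution $a_p$ for each $p$. With this choice every $p$-adic valuation of $dN$ is divisible by $e_d$ (the primes $p>k$ do not divide $dN$), so $dN$ is an $e_d$-th power.

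Admissibility is then immediate. For $p\le k$ we have $p\mid N$, so $\mathcal H$ occupies the single residue class $0$ modulo $p$. For $p>k$ the $k$ elements of $\mathcal H$ cannot cover all $p$ residues. Hence no prime excludes $\mathcal H$.

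The step I expect to be the main obstacle is the explicit numerical bound. First I must extract from \cite{banks} (via Maynard's sieve weights) the concrete $k_0$ that guarantees a pair of consecutive primes. Then I must bound
\[
h_k-h_1=(k-1)N\le k\prod_{p\le k}p^{\prod_d e_d}.
\]
Choosing the $e_d$ as the first $k-1$ primes exceeding $k$ gives $\log N\ll \theta(k)\cdot\prod_d e_d$. This is doubly exponential in $k$, and it is this estimate that yields a bound of the shape $e^{1.834\cdot 10^{76}}$. To make the constant sharp I would try to reduce the exponents, for instance by letting several $d$ share one exponent when their valuation constraints are compatible, or by taking $a_p$ to be the least positive CRT solution. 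I would then check carefully that the resulting explicit inequality matches the constant stated in Theorem~\ref{thm03}.
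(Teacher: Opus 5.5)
Your qualitative argument is correct and is the paper's own. You apply Lemma~\ref{lem05} with $m=2$, $g=1$ to the admissible tuple $\{N,2N,\dots,kN\}$, where $N$ (the paper's $Wa$) is divisible by every prime $\le k$. Its exponents are fixed by the Chinese remainder theorem so that every $dN$, $1\le d\le k-1$, is a perfect power. The gap is the explicit constant, which is part of the statement. Your sketch never produces it, and your particular choice of exponents cannot.

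You do not need to extract $k_0$ from Maynard's weights: Lemma~\ref{lem05} holds for every $k\ge K_2$, and $K_2\le 50$ by \eqref{km02}, so take $k=50$. Then your $e_d$ are the $49$ primes from $53$ to $311$. Writing $\theta(y)=\sum_{p\le y}\log p$, you get $\prod_d e_d=e^{\theta(311)-\theta(47)}\approx e^{247.5}\approx 3\cdot 10^{107}$. That gives a bound of roughly $e^{10^{109}}$, not $e^{1.834\cdot 10^{76}}$. Even with the $49$ smallest primes as exponents, the CRT modulus would be $\prod_{p\le 227}p\approx 10^{89}$, still far too large.

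The missing idea has two parts. First, nothing forces $e_d>k$, since the CRT only needs pairwise coprime moduli. Second, one exponent $q$ can serve every $d$ in a single class modulo $q$-th powers. If $Wa$ is a square, then so is $dWa$ for every square $d$. If $3Wa$ is a cube, then so is $24Wa=8\cdot 3Wa$. The paper gives exponent $2$ to $d\in\{1,4,9,16,25,49\}$, exponent $3$ to $d\in\{3,24\}$, and the primes $5,7,\dots,191$ to the other $41$ values. So each exponent of $a$ is determined modulo $E=\prod_{j\le 43}p_j\approx 1.03\cdot 10^{75}$. Then $a\le W^{E}$ gives $\log_{10}a<E\log_{10}W\approx 1.834\cdot 10^{76}$.

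When you do your final check, note that this yields $10^{1.834\cdot 10^{76}}$, which is what the paper's proof actually concludes. In natural logarithms the certified bound is only $\log a<E\log W\approx 4.2\cdot 10^{76}$, which falls short of the $e^{1.834\cdot 10^{76}}$ in the statement. One further saving closes this: put $36$ into the square class as well. Then $40$ values remain, only the first $42$ primes occur as moduli, and $\log(49Wa)<\theta(47)\prod_{p\le 181}p+45\approx 2.2\cdot 10^{74}$.
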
  

The method of the proof of Theorem \ref{thm03}
can also be used to prove the following more general statement:

%4. oldal
\begin{thm}\lb{thm04}
  For every integer $m\ge 1$, there are infinitely many integers $n$
  for which
  \[
  p_{n+m}-p_{n+1} < \scaleto{e^{e^{e^{10.6m}}}}{20pt}
  \]
  and the difference between any two of the consecutive primes
  $p_{n+1},p_{n+2},\dots,p_{n+m}$
  is a perfect power.
\end{thm}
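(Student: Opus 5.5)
The idea is to use the Banks--Freiberg--Turnage-Butterbaugh theorem \cite{banks}, which combines the Maynard--Tao sieve with the Shiu-type approach and lets us place the primes in prescribed residue classes. In the form we need it says the following. Let $q\ge 1$ and let $a$ be coprime to $q$. For every $m$ there is a bound $C(m)$ (explicitly of size $e^{e^{O(m)}}$ in the effective versions) such that infinitely often there are $m$ consecutive primes $p_{n+1}<\dots<p_{n+m}$. All of them are $\equiv a \pmod q$, and $p_{n+m}-p_{n+1}\le C(m)\,q$, or some similar bound in which the dependence on $q$ is explicit.

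We choose $a=1$ and take for $q$ a suitable modulus $D$. Then every difference $p_{n+j}-p_{n+i}$ is a positive multiple of $D$ that is at most $C(m)D$. So it suffices to build a modulus $D$ with the following property: every multiple $tD$ with $1\le t\le T$, where $T=C(m)$, is a perfect power.

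The key construction is the following. Let $r_1,\dots,r_T$ be distinct primes, with exponents still to be chosen, and set $D=\prod_{\ell} \ell^{e_\ell}$, where $\ell$ runs over all primes $\le T$. We want $tD$ to be an $r_t$-th power for each $t\le T$. For each prime $\ell\le T$ this amounts to congruence conditions on $e_\ell$, namely $e_\ell+v_\ell(t)\equiv 0\pmod{r_t}$ for all $t\le T$. Since the $r_t$ are pairwise coprime, the Chinese remainder theorem gives a solution $e_\ell$ modulo $\prod_t r_t$. Multiples $tD$ with $t>T$ never occur, so only finitely many conditions arise. This yields a legitimate modulus $D$, coprime to $1$, that depends only on $m$.

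We then apply the BFT theorem with modulus $D$. It gives infinitely many $n$ with all the differences among $p_{n+1},\dots,p_{n+m}$ being perfect powers, and with $p_{n+m}-p_{n+1}$ bounded by a function of $m$ alone. The qualitative statement of Theorem \ref{thm04} follows.

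The main obstacle is the explicit bound $e^{e^{e^{10.6m}}}$. Here the size of $D$ matters. The BFT bound for $m$ consecutive primes in an arithmetic progression mod $D$ is roughly $D\cdot e^{O(m)}$, or perhaps a power of $D$ times such a factor, with explicit constants coming from Maynard's sieve (the $e^{c m}$ admissible-tuple length needed for $m$ primes). So the plan is to take $T=e^{c m}$ with an explicit $c$.

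Next we bound $D$. There are $\pi(T)$ primes $\ell$, each exponent satisfies $e_\ell<\prod_{t\le T} r_t\le e^{(1+o(1))T\log T}$, and each $\ell\le T$. This gives $\log D\le \pi(T)\log T\cdot e^{(1+o(1))T\log T}$, so $D\approx e^{e^{T\log T(1+o(1))}}$. With $T=e^{cm}$ the quantity $T\log T$ is at most $e^{c'm}$, which yields the triple exponential.

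One can do better by choosing $r_t$ not for every $t$ but only where necessary, or by taking the $r_t$ as the first $T$ primes. Tracking the constants in the cited theorem so that the final exponent is $10.6m$ is the delicate bookkeeping step. I would first fix the exact explicit form of \cite{banks}, then optimize the choice of $r_t$ to be the first $T$ primes, and finally verify the numerical constant, in parallel with the proof of Theorem \ref{thm03}, which is the case $m=2$.
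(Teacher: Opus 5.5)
Your core idea is the paper's. You force every difference among the $m$ consecutive primes to be a multiple $tN$ of one fixed integer, with $t$ bounded in terms of $m$ only. You then choose the exponents of $N$ by the Chinese remainder theorem, one distinct prime exponent per multiplier $t$, so that all these $tN$ are perfect powers.

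Your residue-class form of Banks--Freiberg--Turnage-Butterbaugh is fine, and it is linear in the modulus. Apply Lemma~\ref{lem05} with $g=D$ and $b_i=1+Dh_i$ for an admissible $\{x+h_i\}$; the differences are then $D(h_j-h_i)$. Your fallback ``perhaps a power of $D$'' would have been fatal, since the range of $t$ would then depend on $D$. So the qualitative statement follows.

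The substance of Theorem~\ref{thm04}, however, is the explicit bound $e^{e^{e^{10.6m}}}$, and you do not prove it. You never fix $C(m)$, and all constants are deferred. Your first claim $C(m)=e^{e^{O(m)}}$ would even turn your construction into a fourfold exponential.

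The deferred step is exactly where your set-up is weaker than the paper's. You demand that $tD$ be a perfect power for \emph{every} $t\le C(m)$. Here $C(m)$ is at least the diameter of an admissible $K_m$-tuple, and the best explicit value obtainable from $K_m\log K_m<e^{8m+4}$ is about $e^{8m+4}$. Your CRT modulus is then the product of the first $C(m)$ primes, so $\log\log\log D\approx \log C(m)+\log\log C(m)$.

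For $m=3$ the target is $31.8$. With $C(m)=e^{8m+5}$ your estimate gives about $32.5$, and even $C(m)=e^{28}$ gives about $31.5$. You would therefore need sharp explicit bounds on the tuple diameter; for $m\ge 4$ there is slack, but not at $m=3$.

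The paper avoids this by taking $g=1$ and the tuple $\{Wa,2Wa,\dots,K_mWa\}$ with $W=\prod_{p\le K_m}p$.
\begin{itemize}
\item The tuple is admissible: every element is $\equiv 0\pmod p$ for $p\le K_m$, and there are fewer than $p$ elements for $p>K_m$.
\item Its differences are only the $K_m-1$ numbers $tWa$ with $1\le t\le K_m-1$.
\item So the exponents of $a$, which is supported on primes $\le K_m$, are determined modulo $p_1\cdots p_{K_m}$ only.
\end{itemize}
Then $\prod_{p\le x}p<e^{1.01624x}$, $p_k\le k(\log k+\log\log k)$ and $K_m\log K_m<e^{8m+4}$ give $a<e^{e^{e^{10.5m}}}$. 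Hence $p_{n+m}-p_{n+1}\le (K_m-1)Wa<e^{e^{e^{10.6m}}}$ for $m\ge 3$. The case $m=2$ is handled separately via $K_2\le 50$, and $m=1$ is trivial.
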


\begin{prob}
  The upper bounds in Theorems \ref{thm03} and \ref{thm04}
  are clearly not optimal.
  It might be interesting to sharpen the constants in these theorems
  significantly.
\end{prob}

In addition to the cases concerning squares and powers, it is also interesting
to study sets of primes for which
the difference between two primes is never a square.
Then, using the results of Ruzsa \cite{Ruzsa} and Lewko \cite{Lewko},
I will prove the following:

\begin{thm}\lb{thm05} Let
  $\gamma = \dfrac{1}{2}+\dfrac{\log 12}{2\log 205}=0.7334117\dots$.
  If $x\ge 2$, then in the set $\{1,2,\dots,x\}$ there exist at
  least $\dfrac{1}{205}x^{\gamma}/\log x$ primes such that the
  difference between any two primes is never a square.
\end{thm}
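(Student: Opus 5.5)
The plan is to use Ruzsa's construction of large square-difference-free sets in its refined form due to Lewko. Take the modulus $m=205$ and a set $A\subseteq\{0,1,\dots,204\}$ with $|A|=12$. Its defining property is that for any distinct $a,a'\in A$, the difference $a-a'$ is not congruent to a square modulo $205$. (Lewko's search is in fact stated for the slightly weaker Ruzsa-type condition, which allows the difference to be $0$ times a unit square.) The Ruzsa set is
\[
S=\Big\{\sum_{i\ge 0} a_i\,205^{2i}\ :\ a_i\in A\Big\}\cap[1,x].
\]
Ruzsa's base-$m$ argument shows that no difference of two distinct elements of $S$ is a square. Write $s-s'=205^{2j}(a_j-a_j'+205\cdot(\dots))$, where $j$ is the lowest position with differing digits. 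Since $205^{2j}$ is itself a square, $s-s'$ is a square only if $a_j-a_j'+205(\cdots)$ is a square, and this would force $a_j-a_j'$ to be a square mod $205$. That contradicts the choice of $A$.

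Counting gives $|S|\gg x^{\log 12/(2\log 205)}$. This exponent is exactly $\gamma-\tfrac12$. So $S$ alone only reaches exponent $\gamma-1/2$, and the extra $x^{1/2}$ must come from a translation or sieving step in which we look for primes. The idea is to consider shifted sets $t+S$ with $S\subseteq[1,\sqrt x]$, or more precisely a two-level structure. Every element of $\{1,\dots,x\}$ splits into a high part and a low part, with the high part ranging over a Ruzsa-type set in the even-indexed base-$205$ digits only. The odd-indexed digits are left completely free. Differences whose lowest differing digit sits at an odd position $2j+1$ could still be squares, however. So the cleaner route is a pigeonhole.

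Let $B$ be the full Ruzsa-type set in $[1,x]$ with $|B|\asymp x^{\gamma-1/2}$ (up to the constant coming from the leading digit). Consider the translates $t+B$ for $t\in[1,x]$, truncated to $[1,x]$. The elements of each translate have pairwise differences lying in $B-B$, so none is a square. Summing over $t$, the total number of pairs $(t,b)$ with $t+b$ prime is about $|B|\cdot\pi(x)\approx |B|x/\log x$, since for each $b$ the number of $t$ with $t+b\le x$ prime is $\pi(x-b)+O(1)$. Restricting $B$ to $[1,x/2]$ keeps this at least $\tfrac12|B|\,x/\log x$. Averaging over the $x$ choices of $t$ then gives some $t$ with $|(t+B)\cap\mathbb P|\ge c\,|B|/\log x$. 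This is the right order only if $|B|$ already has exponent $\gamma$. So the set $B$ must be taken with exponent $\gamma$, and this is where I expect the main obstacle: reconciling the exponent. I would re-examine Ruzsa/Lewko's theorem as stated in the paper's source. Lewko's result may give a square-difference-free set of density exponent $\tfrac12\big(1+\frac{\log 12}{\log 205}\big)=\gamma$ directly, via digits in base $205$ where each digit pair contributes the factor $205\cdot 12$ per $205^2$. That yields $|B|\ge x^{\gamma}/205$ after accounting for the incomplete top digit. Taking this as the input, the averaging above finishes the proof.

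The constants are then checked as follows. There is a factor $1/205$ from truncating the leading block. The prime counting uses $\pi(y)\ge y/\log y$ for $y\ge 17$, together with elementary handling of small $x$. To avoid losing a factor $2$ in the averaging, I would average $t$ over $[0,x]$ with $b\le x$ and count primes in $[1,x]$ exactly, giving $\sum_t |(t+B)\cap \mathbb P\cap[1,x]| = \sum_b \pi(x)-\pi(b)$. The small-$x$ range, where the bound $x^\gamma/(205\log x)<1$ is trivial as soon as one prime exists, is handled directly.
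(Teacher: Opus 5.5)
Your final route reaches the exponent $\gamma$, but it does not prove the theorem with the stated constant $\tfrac{1}{205}$, and the step you offer to avoid the loss is incorrect. The route is: take a square-difference-free set $B\subseteq[1,x]$ with $|B|\ge x^{\gamma}/205$ from Ruzsa's Theorem C with Lewko's $r(205)\ge 12$ as a black box, then average over ordinary integer translates $t+B$.

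For $0\le t\le x$ one has $\sum_t|(t+B)\cap\mathbb P\cap[1,x]|=\sum_{b\in B}(\pi(x)-\pi(b-1))$, which is not comparable to $|B|\pi(x)$. Elements $b$ near $x$ contribute almost nothing, and for a digit set spread over $[1,x]$ the sum is roughly $\tfrac12|B|\pi(x)$, giving only about $x^{\gamma}/(410\log x)$. The other options fare no better:
\begin{itemize}
\item Letting $t$ run over $(-x,x)$ makes the sum exactly $|B|\pi(x)$, but it doubles the number of shifts.
\item Confining $B$ to $[1,\theta x]$ costs a factor $\theta^{\gamma}$ in $|B|$.
\end{itemize}
With nothing but the bound $x^{\gamma}/205$, translation in $\mathbb Z$ always has such boundary losses.

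The missing idea is to translate digit-wise modulo $m$ instead. Set $n=\lfloor\log_m x\rfloor$. Keep the odd base-$m$ digits free, and at each even position $j<n$ let the digit range over its own shift $R+i_j \pmod m$ of Lewko's set $R$; add $1$ so the sets lie in $[1,m^n]$. Let $k\le (n+1)/2$ be the number of even positions, so there are $m^{k}$ such sets. Each is square-difference-free by the lowest-differing-digit argument. Every integer in $[1,m^n]$ lies in exactly $r(m)^{k}$ of them, because a fixed digit lies in exactly $r(m)$ shifts of $R$. This exact cover makes the pigeonhole lossless. Using $\pi(y)>y/\log y$, $r(m)<m$ and $m^{n+1}>x$, some set contains at least
$(r(m)/m)^{k}\pi(m^n)> (r(m)/m)^{(n+1)/2}m^n/\log x=(r(m)m)^{(n+1)/2}/(m\log x)>x^{\gamma}/(m\log x)$ primes. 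This is the paper's proof.

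Your reason for abandoning free odd digits is also false, and free odd digits are exactly where the extra $x^{1/2}$ comes from. Suppose the lowest differing digit sits at an odd position $k$. Then $s-s'=m^{k}(d+mz)$ with $0<|d|<m$. If this were a square, every prime $p\mid m$ would divide it to an even power $\ge k$, hence $\ge k+1$. Since $m$ is squarefree, this gives $m^{k+1}\mid s-s'$, forcing $m\mid d$, a contradiction. So the set you imported as a black box is precisely the structure you had dismissed. Once you see why it works, you can adapt it directly to primes as above.
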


We may also study the case when we require that the primes
in Theorem \ref{thm05} should be consecutive primes. In this case,
I have been able to prove the existence of a much smaller set of primes
with the desired property.

\begin{thm}\lb{thm06}
  There exists a positive constant $x_0$ such that if $x\ge x_0$,
  then in the set $\{1,2,\dots,x\}$ there exist $k=\left[0.24(\log x)^{1/4}\right]$
  consecutive primes such that the difference between any two of these primes is
  never a square.
\end{thm}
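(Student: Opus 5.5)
The plan is a counting argument over disjoint blocks of consecutive primes. Heuristically, a difference of size at most $B$ is a square with "probability" about $B^{-1/2}$. So if $k$ consecutive primes span an interval of length about $k\log x$, the $\binom k2$ pairwise differences should avoid squares as long as $k^2 (k\log x)^{-1/2}$ is small, i.e.\ $k\ll(\log x)^{1/3}$. This is more room than the claimed $(\log x)^{1/4}$ needs, so crude constants suffice. To make this rigorous I will count ``bad'' blocks, using an upper bound sieve for prime pairs $p,\,p+m^2$ together with the Prime Number Theorem.

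\textbf{Setup.} Consider the primes in $(x/2,x]$; there are $N\sim x/(2\log x)$ of them. Cut the sequence of these consecutive primes into disjoint blocks of $k$ consecutive primes, which gives about $N/k$ blocks. Fix a parameter $B$ to be chosen later, of order $k\log x$. A block is \emph{bad of type I} if its length $p_{n+k}-p_{n+1}$ exceeds $B$. Since the blocks are disjoint subintervals of $(x/2,x]$, there are at most $x/(2B)$ such blocks. A block of length at most $B$ is \emph{bad of type II} if two of its primes differ by a square $m^2\le B$. Every such block contains a pair of primes $p,\,p+m^2$ with $p\le x$ and $m\le\sqrt B$, and distinct blocks give distinct pairs.

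\textbf{Counting type II.} By Selberg's (or Brun's) upper bound sieve, for even $d\ne0$ we have
\[
\#\{p\le x:\ p+d\ \text{prime}\}\le (4+o(1))\,\mathfrak S(d)\,\frac{x}{\log^2 x},
\qquad
\mathfrak S(d)=2C_2\prod_{q\mid d,\ q>2}\frac{q-1}{q-2}.
\]
The differences here are between odd primes, so $m$ is even, and $\mathfrak S(m^2)=\mathfrak S(m)$. The average of $\mathfrak S(m)$ over $m\le M$ is bounded; in fact $\sum_{m\le M}\mathfrak S(m)\sim M$, and an explicit bound like $\le cM$ for large $M$ is enough. Hence the number of type II bad blocks is at most $c_1\sqrt B\,x/\log^2x$ for an explicit absolute constant $c_1$.

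\textbf{Conclusion.} A good block exists as soon as
\[
\frac{x}{2B}+c_1\sqrt B\,\frac{x}{\log^2 x}<(1-o(1))\frac{x}{2k\log x}.
\]
Take $B=4k\log x$. Then the first term is half the right-hand side. The second term is smaller than a quarter of it provided $k^{3/2}\le c_2\sqrt{\log x}$, i.e.\ $k\le c_3(\log x)^{1/3}$. This certainly holds for $k=[0.24(\log x)^{1/4}]$ once $x\ge x_0$. A good block then gives $k$ consecutive primes in $\{1,\dots,x\}$ whose pairwise differences are never squares.

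\textbf{Main obstacle.} The delicate point is the type II estimate. It needs an upper bound sieve for $p,\,p+m^2$ that is uniform in $m\le\sqrt B$, which is fine because $m^2\le x$ and the sieve bound is uniform in $d$, together with a bounded average of the singular series over squares. If I can only use a cruder uniform bound, such as $\mathfrak S(d)\ll\log\log d$, I will lose a $\log\log\log x$ factor. The exponent $1/3$ leaves enough slack to absorb that and still reach the stated $(\log x)^{1/4}$. The explicit constant $0.24$ then just comes from tracking the sieve constant, and the margin is comfortable.
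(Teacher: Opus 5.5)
Your proof is correct. It takes a different route from the paper's, and in fact a stronger one.

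\textbf{How the paper argues.} The paper does not use disjoint blocks. With $t=[0.24(\log x)^{1/4}]$ and a parameter $r$, it colours a prime $p_i$ red if $p_i+h^2$ is prime for some $h\le r$, yellow if $p_{i+t}-p_i>r^2$, and green otherwise.
\begin{itemize}
\item The red count $R\le 6.646\,rx/(\log x)^2$ is essentially your type II estimate with $r=\sqrt B$. It uses the same Halberstam--Richert bound, the same observation $p\mid h^2\iff p\mid h$, and the same averaged singular series.
\item The yellow count $Y\le tx/r^2$ comes from splitting the primes into the $t$ interleaved subsequences $p_j,p_{t+j},p_{2t+j},\dots$. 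In each telescoping sum of differences (total $<x$), at most $x/r^2$ terms can exceed $r^2$.
\item Finally, the alternating red/green interval pigeonhole from the proof of Theorem 1 produces $t$ consecutive green primes.
\end{itemize}

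\textbf{Where the paper loses.} The loss is in $Y\le tx/r^2$. The windows $[p_i,p_{i+t}]$ overlap, so their total length is about $tx$ rather than $x$. Balancing $rx/(\log x)^2$ and $tx/r^2$ against $x/(t\log x)$ then forces $t^4\lesssim\log x$, which is where the exponent $1/4$ comes from.

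\textbf{What your blocks gain.} Your disjoint blocks have total length at most $x/2$, so the type I count $x/(2B)$ carries no factor $k$. The same balancing then gives $k^3\lesssim\log x$. With the paper's constant $6.646$ and the optimal choice of $B$, this is roughly $k<0.09(\log x)^{1/3}$. That exceeds $0.24(\log x)^{1/4}$ once $\log x$ is of order $10^5$, which is harmless since $x_0$ is not specified.

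So your argument is shorter, since it needs no interval-alternation step, and it improves the exponent from $1/4$ to $1/3$. The paper's version mainly buys uniformity of exposition with Theorem 1.

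One small slip: with $B=4k\log x$, the term $x/(2B)=x/(8k\log x)$ is a quarter, not half, of $x/(2k\log x)$. This only helps you.
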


\begin{prob} Is Theorem \ref{thm06} true for $k=\left[x^c\right]$
  consecutive primes, where $c>0$ is an absolute constant? 
\end{prob}

The structure of this paper is as follows. In Section 2, we provide the proof of Theorem \ref{thm01} and discuss the underlying sieve methods. Section 3 is devoted to the proof of Theorems \ref{thm03} and \ref{thm04}, while the proofs of the remaining two theorems are detailed in Sections 4 and 5. To achieve these results, we utilize a combination of prime number theory and combinatorial methods. All numerical computations throughout the proofs were performed using the SAGE programming language \cite{SAGE}; the corresponding source codes are provided in the Appendix.

%5. oldal
\section{Proof of Theorem \ref{thm01}.}

Throughout the proof, we may assume that $k\le 0.0752\log x$;
otherwise, the theorem is trivial.
Let
\[
r=0.1504\phantom{\cdot}\dfrac{\log x}{k}\ge 2.
\]
Then,
\[
rk=0.1504\log x.
\]
Assume that $x$ is sufficiently large ($x>x_0$).
In the interval $[1,x]$, we indicate the primes with two colors, red and green.
The color of a prime $1< p\le x$ is red if there exists another prime
$q>p$ for which $q-p\le r$. The color of a prime $1< p\le x$ is
green if there does not exist such a prime $q$, i.e.,
for all $q>p$ primes, we have
\[
q-p > r.
\]
Let $R$ denote the number of red primes
and $G$ denote the number of green primes in the interval $[1,x]$.
The main idea of the proof is to find many consecutive green primes
in the interval $[1,x]$. To do this, we will prove the following:
\begin{lem}\lb{lem01}
  There exists a constant $x_0$ such that for all $x > x_0$, the number
  of red primes in $[1,x]$ satisfies
  \[
  R\le 6.646\phantom{\cdot}\dfrac{rx}{(\log x)^2}.
  \]
\end{lem}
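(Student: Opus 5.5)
Every red prime $p\le x$ has a prime partner $q=p+h$ with $1\le h\le r$, so I would bound $R$ by a union bound over the shift $h$:
\[
R\le \sum_{1\le h\le r}\#\{p\le x:\ p,\ p+h \text{ both prime}\}.
\]
Only even $h$ contribute, apart from $h=1$, which contributes only the pair $(2,3)$. For each even $h$ I would invoke the standard upper bound sieve for prime pairs, in the explicit form coming from Selberg's sieve (e.g.\ the bound of Riesel--Vaughan type, or Siebert's constant):
\[
\pi_h(x)\le (C_0+o(1))\,\mathfrak S(h)\,\frac{x}{(\log x)^2},
\]
uniformly for $h\le \log x$. Here $\mathfrak S(h)=2\prod_{p>2}\bigl(1-\frac{1}{(p-1)^2}\bigr)\prod_{p\mid h,\,p>2}\frac{p-1}{p-2}$ is the singular series, and $C_0=4$ is the Selberg constant relative to the conjectured asymptotic. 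Uniformity in $h$ is harmless here, since $h\le r\le 0.0752\log x$ is tiny compared to $x$, and the remainder terms of the sieve are $O(x^{1-\delta})$.

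Next I need the average of the singular series over even $h\le r$. By the classical result going back to Hardy--Littlewood, the average of $\mathfrak S(h)$ over all $h\le r$ is asymptotically $1$. So the sum over even $h\le r$ is $\sim r$, i.e.\ average $2$ over the $r/2$ even values. Summing gives
\[
R\le (C_0+o(1))\,\frac{rx}{(\log x)^2}\bigl(1+o_r(1)\bigr)+O(r).
\]

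The obstacle is that $r$ need not tend to infinity: it can be as small as $2$, which happens when $k\approx 0.0752\log x$. So I cannot use the asymptotic average of $\mathfrak S$. Instead I need an explicit inequality valid for all $r\ge 2$:
\[
\sum_{2\le h\le r,\ 2\mid h}\mathfrak S(h)\le c_1 r.
\]
I would prove this by writing $\prod_{p\mid h}\frac{p-1}{p-2}=\sum_{d\mid h,\ d \text{ odd squarefree}} g(d)$ with $g(d)=\prod_{p\mid d}\frac{1}{p-2}$. Then
\[
\sum_{h\le r,\ 2\mid h}\prod_{p\mid h}\frac{p-1}{p-2}\le \frac r2\sum_d \frac{g(d)}{d},
\]
and the sum equals $\prod_{p>2}\bigl(1+\frac{1}{p(p-2)}\bigr)$, which is exactly cancelled by the twin prime constant. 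This yields $\sum\mathfrak S(h)\le r$ up to the floor error, so $c_1=1$, and small $r$ can be checked numerically.

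With the Selberg constant $C_0=4$ and possibly a sharper explicit prime-pair bound, one gets $R\le C\,rx/(\log x)^2$. I expect the main obstacle to be the constant: $6.646$ presumably arises as (sieve constant)$\times$(twin constant $2\cdot 0.6601\ldots$)$\times$(averaging factor), together with floor losses for small $r$. I would tune the explicit sieve bound (e.g.\ Wu's or Riesel--Vaughan's $\pi_h(x)\le 8\prod\ldots\cdot x/(\log x)^2$ type constant) against the floor loss in the even-$h$ count, and verify numerically that the product stays below $6.646$ for every $r\ge 2$.
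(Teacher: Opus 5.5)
Your proposal is correct. It shares the paper's skeleton: a union bound over $h\le r$, the Halberstam--Richert bound of Lemma~\ref{lem02} uniformly in $h$, and a negligible contribution from odd $h$. Lemma~\ref{lem02} is exactly your $C_0=4$, since $8C_2=4\cdot 2C_2$ with $C_2=\prod_{p>2}\bigl(1-\frac{1}{(p-1)^2}\bigr)$. One small correction: odd $h\ge 3$ can also contribute, e.g.\ $(2,5)$, but only ever through the single prime $2$, so this costs at most $1$.

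The genuine difference is in how you average the singular-series factor over even $h\le r$. The paper uses $\frac{p-1}{p-2}<\bigl(1+\frac1p\bigr)e^{2/((p-2)(p+1))}$ and replaces the exponential factor by its worst case $e^{0.7272}<2.0693$ for every $h$. It then averages $\prod_{p\mid h}\bigl(1+\frac1p\bigr)=\sum_{d\mid h}\mu^2(d)/d$, and after the numerical evaluations in Programs 1--4 obtains $1.2583\,r$; the product $8\cdot 0.6602\cdot 1.2583\approx 6.646$ is exactly where the constant comes from. You instead expand $\prod_{2<p\mid h}\bigl(1+\frac{1}{p-2}\bigr)$ exactly and get $\frac r2\prod_{p>2}\bigl(1+\frac{1}{p(p-2)}\bigr)=\frac{r}{2C_2}$. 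This cancels the twin prime constant on the nose, is sharper than the paper's bound by a factor of about $1.66$, and needs no computation.

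So, contrary to your last paragraph, nothing is left to tune. Since $\#\{h\le r:\ 2d\mid h\}=\lfloor r/(2d)\rfloor\le r/(2d)$ for every real $r$, the bound $\sum_{2\le h\le r,\ 2\mid h}\mathfrak S(h)\le r$ holds exactly, with no floor loss and no small-$r$ verification. Combined with the absolute $O$-constant in Lemma~\ref{lem02}, which is what makes $x_0$ independent of $k$, it gives $R\le (4+o(1))\frac{rx}{(\log x)^2}+1$. This is stronger than the lemma, and fed into the proof of Theorem~\ref{thm01} it would replace $0.1504$ by any constant below $1/4$.
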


Before proving the lemma, let us see how the theorem follows from it.
In the interval $[1,x)$, let us call a subinterval $[a,b)$ red if all the primes
it contains are red. Similarly, let us call a subinterval $[a,b)$ green if all the primes
it contains are green.
%6. oldal
The interval $[1,x)$ can be partitioned into the union of disjoint 
red and green intervals. Since these intervals alternate in colour 
(and we may assume that $[1,x)$ starts with a red interval), it follows 
that a red interval is followed by a green one, and a green interval is 
followed by a red one. In this partition, the number of green intervals 
is less than or equal to the number of red intervals, which in turn is 
less than or equal to the total number of red primes. That is,
\[
\textup{\#(red intervals)}\le R \le 6.646 \frac{rx}{(\log x)^2}.
\]
By Rosser and Schoenfeld's result \cite[Theorem 9]{Rosser}
\begin{equation}
\pi(x) >\dfrac{x}{\log x},\ \ \textup{ if }x\ge 17.\lb{primszamtetel}
\end{equation}  
Thus, for the number of green primes, we have
\[
G\ge \dfrac{x}{\log x} -R\ge \dfrac{x}{\log x}-6.646\dfrac{rx}{(\log x)^2}.
\]
That is, in the above division of $[1,x]$ into disjoint intervals,
we have at most $6.646\phantom{\cdot}\dfrac{rx}{(\log x)^2}$
green intervals, which contain more than
$\dfrac{x}{\log x} -6.646\phantom{\cdot}\dfrac{rx}{(\log x)^2}$
green primes. Then, by the pigeon-hole principle,
there exists a green interval that contains at least 
  \begin{align*}
    &\phantom{\ge}\dfrac{\dfrac{x}{\log x}
      -6.646\phantom{\cdot}\dfrac{rx}{(\log x)^2}}{6.646\phantom{\cdot}
      \dfrac{rx}{(\log x)^2}}
      \intertext{green primes, which is}
      &> 0.1504\phantom{\cdot}\dfrac{\log x}{r}-1\\
      &= 0.1504\phantom{\cdot}\dfrac{\log x}{0.1504 \log x/k}-1\\
      &=k-1.
  \end{align*}
  Denote the first $k$ of these green primes by
  $p_n,p_{n+1},\dots,p_{n+k-1}$. Since these primes are consecutive primes
  in a green interval, the distance between two adjacent ones is
  $> r =0.1504\phantom{\cdot}\dfrac{\log x}{k}$, from which
  Theorem \ref{thm01} follows.
  %7. oldal
  All that remains is to prove Lemma \ref{lem01}.
  
  \bigskip\noindent\textbf{The proof of Lemma \ref{lem01}.}
  A classical upper bound for the number of prime pairs with a given difference $h$ 
can be found in the work of Halberstam and Richert \cite[Theorem 3.11]{Halberstam}. 
While more recent developments (see e.g. \cite{Johnston, Lichtman}) 
provide sharper constants or fully explicit versions of this inequality, and 
explicit bounds holding for all $x > 1$ are also available in the literature 
(see e.g. Riesel and Vaughan \cite[Lemma 5]{Riesel}), the 
following asymptotic form is sufficient for our current purposes.

  \begin{lem}[{\cite[Theorem 3.11]{Halberstam}}]\lb{lem02}
    If $N\rightarrow\infty$ and $2\mid h$, then
    \begin{align*}
      &\phantom{=}\ab{\{p:\ p\le N,\ p+h=p' \textup{ and }
        p,p' \textup{ are primes } \}}\\
      &\le 8\prod_{2<p}\zj{1-\dfrac{1}{(p-1)^2}}
      \prod_{2<p\mid h} \dfrac{p-1}{p-2}
      \cdot\dfrac{N}{(\log N)^2}
      \zj{1+O\zj{\dfrac{\log\log N}{\log N}}},
    \end{align*}
    where the implied constant factor used in the
    $O$-term is absolute and does not depend on $h$.
  \end{lem}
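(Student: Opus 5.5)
The plan is the Selberg upper bound sieve in dimension $\kappa=2$, applied to the polynomial sequence $n(n+h)$. Fix even $h$ and put
\[
\mathcal A=\{\,n(n+h):\ 1\le n\le N\,\}.
\]
If $p\le N$ and $p+h$ are both prime and $p>z$, then $n(n+h)$ with $n=p$ has no prime factor $\le z$. Hence the quantity to bound is at most $S(\mathcal A,\mathcal P,z)+z$, where $S(\mathcal A,\mathcal P,z)$ counts the elements of $\mathcal A$ free of prime factors $\le z$. The term $z$ will be negligible because $z$ is taken of size about $N^{1/2}$.

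\textbf{Sieve data.} For squarefree $d$ we have
\[
|\mathcal A_d|=\frac{\omega(d)}{d}\,N+r_d ,
\]
with $\omega$ multiplicative and $\omega(2)=1$. For odd $p$ the residue class condition $n(n+h)\equiv 0 \pmod p$ gives $\omega(p)=2$ if $p\nmid h$ and $\omega(p)=1$ if $p\mid h$. The remainder satisfies $|r_d|\le \omega(d)\le 2^{\nu(d)}$, where $\nu(d)$ is the number of prime factors of $d$. Selberg's upper bound then gives
\[
S(\mathcal A,\mathcal P,z)\le \frac{N}{G(z)}+\sum_{d<z^2,\ d\mid P(z)}3^{\nu(d)}|r_d|,
\qquad
G(z)=\sum_{d<z,\ d\mid P(z)}\ \prod_{p\mid d}\frac{\omega(p)}{p-\omega(p)} .
\]
The error sum is $\ll z^2(\log z)^{5}$. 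Choosing $z=N^{1/2}(\log N)^{-4}$ makes it $O\bigl(N/(\log N)^3\bigr)$, which is absorbed into the claimed error term.

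\textbf{Main term.} I need the lower bound
\[
G(z)\ge \frac{(\log z)^2}{2\,\mathfrak S(h)}\bigl(1+O(\log\log z/\log z)\bigr),
\qquad
\mathfrak S(h)=2\prod_{p>2}\Bigl(1-\frac{1}{(p-1)^2}\Bigr)\prod_{2<p\mid h}\frac{p-1}{p-2}.
\]
Given this, since $(\log z)^2=\tfrac14(\log N)^2(1+O(\log\log N/\log N))$, we get
\[
\frac{N}{G(z)}\le \frac{8\,\mathfrak S(h)}{2}\cdot\frac{N}{(\log N)^2}\bigl(1+O(\log\log N/\log N)\bigr),
\]
which is exactly the stated constant $8\prod_{p>2}(1-(p-1)^{-2})\prod_{2<p\mid h}\frac{p-1}{p-2}$. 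The factor $1/2$ in front of $(\log z)^2$ is the familiar $1/\kappa!$ for $\kappa=2$, and $\mathfrak S(h)$ arises from comparing $\omega(p)/(p-\omega(p))$ with $2/p$.

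To prove the lower bound, write $g(d)=\prod_{p\mid d}\omega(p)/(p-\omega(p))$. Express $g=\tau * \lambda$ at the level of Dirichlet series, where $\lambda$ is a multiplicative correction with $\sum_d|\lambda(d)|d^{-1/2+\epsilon}$ convergent. Summing $\tau(m)$ over $m<z/e$ by partial summation, and truncating the $\lambda$-sum, gives the main term with relative error $O(1/\log z)$.

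\textbf{Main obstacle: uniformity in $h$.} The implied constant must not depend on $h$. Primes $p\mid h$ alter $\lambda$, since there $\omega(p)=1$, and the truncation error involves $\sum_{d>D}|\lambda(d)|$, which could grow with the number of prime divisors of $h$. I would first try the classical Halberstam--Richert device. Bound $G(z)$ from below using only $d$ free of prime factors of $h$ larger than $\log z$; the primes of $h$ below $\log z$ are handled through a finite Euler product. Since $h\le N$ has at most $O(\log N/\log\log N)$ prime divisors, the large ones cost only a factor $1+O(\log\log N/\log N)$ after using $\prod_{p\mid h,\,p>\log z}(1+O(1/p))$. This is where the $\log\log N$ in the error term comes from. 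If that bookkeeping fails, the fallback is to quote the uniform Selberg-sieve asymptotic for $G(z)$ in dimension $\kappa$ directly (Halberstam--Richert, Lemma 4.1 type), which is uniform given the $\Omega_1$, $\Omega_2(\kappa)$ conditions that hold here with absolute constants.
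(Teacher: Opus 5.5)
The paper gives no proof of this lemma; it simply quotes Halberstam--Richert, Theorem~3.11. Your proposal has a genuine gap, and it sits exactly at the step where you claim to recover the constant. Write $C(h)=\prod_{p>2}\bigl(1-\frac{1}{(p-1)^2}\bigr)\prod_{2<p\mid h}\frac{p-1}{p-2}$. The lemma asserts the constant $8C(h)$, and your $\mathfrak S(h)$ equals $2C(h)$. From $G(z)\ge (\log z)^2/(2\mathfrak S(h))$ and $(\log z)^2=\frac14(\log N)^2\bigl(1+O(\log\log N/\log N)\bigr)$ one gets
\[
\frac{N}{G(z)}\le \frac{2\,\mathfrak S(h)\,N}{(\log z)^2}=\frac{8\,\mathfrak S(h)\,N}{(\log N)^2}\Bigl(1+O\Bigl(\frac{\log\log N}{\log N}\Bigr)\Bigr)=\frac{16\,C(h)\,N}{(\log N)^2}\Bigl(1+O\Bigl(\frac{\log\log N}{\log N}\Bigr)\Bigr).
\]
This is not $\frac{8\mathfrak S(h)}{2}\,N/(\log N)^2$: the $1/\kappa!$ in $G(z)$ becomes a factor $\kappa!=2$ in the bound.

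Sharper bookkeeping cannot repair this. The asymptotic $G(z)\sim(\log z)^2/(2\mathfrak S(h))$ is exact. For $\{n(n+h)\}$ the remainders $r_d$ are genuinely of size about $\omega(d)$, so the level $z^2$ cannot exceed about $N$. The two-dimensional Selberg sieve on $n(n+h)$ therefore stops at $16C(h)$, which is eight times the Hardy--Littlewood prediction $\mathfrak S(h)N/(\log N)^2$; the lemma claims four times. For the paper this matters: with $16$ in place of $8$, the constant $6.646$ in Lemma~\ref{lem01} doubles, and $0.1504$ in Theorem~\ref{thm01} drops to about $0.0752$.

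The missing idea is the Bombieri--Davenport argument behind Halberstam--Richert's Theorem~3.11.

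\textbf{Setup.} Sieve the one-dimensional sequence $\mathcal A=\{p+h:\ p\le N\}$ with $X=\operatorname{li}N$. Take $\omega(q)/q=1/(q-1)$ for $q\nmid h$ and $\omega(q)=0$ for $q\mid h$. Then $g(q)=1/(q-2)$ for odd $q\nmid h$, and $\prod_q(1-\omega(q)/q)(1-1/q)^{-1}=\mathfrak S(h)$.

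\textbf{Remainder.} The Bombieri--Vinogradov theorem, with Cauchy--Schwarz to absorb the weights $3^{\nu(d)}$, gives $\sum_{d<z^2}\mu^2(d)3^{\nu(d)}|r_d|\ll N(\log N)^{-3}$ for $z^2=N^{1/2}(\log N)^{-B}$. This is uniform in $h$, since the theorem bounds the maximum over reduced residue classes.

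\textbf{Main term.} Uniformity in $h$ is clean in dimension one. Compare with $g^*(q)=1/(q-2)$ for all odd $q$; this gives $G(z)\ge\prod_{2<q\mid h}\frac{q-2}{q-1}\sum_{d<z,\,2\nmid d}\mu^2(d)g^*(d)$. The last sum is $\frac{\log z}{\mathfrak S(2)}+O(1)$, so $G(z)\ge\frac{\log z}{\mathfrak S(h)}\bigl(1+O(1/\log z)\bigr)$, with no dependence on $\nu(h)$.

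\textbf{Conclusion.} Since $\log z=\frac14\log N-O(\log\log N)$, you get $X/G(z)\le 4\mathfrak S(h)\frac{N}{(\log N)^2}\bigl(1+O(\frac{\log\log N}{\log N})\bigr)=8C(h)\frac{N}{(\log N)^2}\bigl(1+O(\frac{\log\log N}{\log N})\bigr)$. The $\log\log N$ comes from the $(\log N)^{-B}$ loss in the level, not from the prime factors of $h$.

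Your uniformity device would not have given the stated error term in any case. The lemma does not assume $h\le N$. Even when $h\le N$, $\sum_{p\mid h,\,p>\log z}1/p\le\nu(h)/\log z$ is only $O(1/\log\log N)$, not $O(\log\log N/\log N)$.
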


In the following, we denote the integer part of $r$ by $[r]$.
If $h$ is odd, then $\ab{\{p:\ p\le N,\ p+h=p' \textup{ and }
  p,p'\textup{ are primes} \}}\le 1$, since by parity, we know that
one of the primes $p$ and $p'$ is even, so it is $2$.
Using this and Lemma \ref{lem02}, we find that the number of red primes is
  \begin{align}
    R &\le
    \sum_{\substack{h=2\\ 2\mid h}}^{[r]} 8
    \prod_{2<p}\zj{1-\dfrac{1}{(p-1)^2}}
    \prod_{2<p\mid h}\dfrac{p-1}{p-2}
    \cdot \dfrac{x}{(\log x)^2}
    \zj{1+O\zj{\dfrac{\log\log x}{\log x}}}\notag\\
    &+\sum_{\substack{h=1 \\ h\textup{ is odd}}}^{[r]} 1.\lb{becser}
  \end{align}
  In the above estimate, $O$ is independent of $h$ and
  $  \sum\limits_{\substack{h=1 \\ h\textup{ is odd}}}^{[r]} 1\le [r] <\log x$,
  thus, if
  $x$ is sufficiently large, then
  \[
  R < 8.0001
  \sum_{\substack{h=2\\ 2\mid h}}^{[r]}
   \prod_{2<p}\zj{1-\dfrac{1}{(p-1)^2}}
    \prod_{2<p\mid h}\dfrac{p-1}{p-2}
    \cdot \dfrac{x}{(\log x)^2}.
  \]
  A numerical computation using SAGE programming language
  (Appendix, Program 1) shows that
\[
\prod_{2<p}\zj{1-\dfrac{1}{(p-1)^2}}<\prod_{2<p<5000}
\zj{1-\dfrac{1}{(p-1)^2}} < 0.6602.
\]
These Euler products can be computed to high precision; for an extensive survey and numerical values, see e.g. Moree \cite{Moree}.
%8. oldal
Thus,
\begin{align}
R  &<5.2817
    \sum_{\substack{h=2\\ 2\mid h}}^{[r]} 
    \prod_{2<p\mid h}\dfrac{p-1}{p-2}
    \cdot \dfrac{x}{(\log x)^2}.\lb{thm01/01}
\end{align}
So, we need to give an upper bound for
\[
\sum_{\substack{h=2\\ 2\mid h}}^{[r]} 
    \prod_{2<p\mid h}\dfrac{p-1}{p-2}.
\]
Here,
\begin{align*}
  \dfrac{p-1}{p-2}
  &=\zj{1+\dfrac{1}{p}}\cdot\dfrac{(p-1)p}{(p-2)(p+1)}\\
  &=\zj{1+\dfrac{1}{p}}\zj{1+\dfrac{2}{(p-2)(p+1)}}\\
  &<\zj{1+\dfrac{1}{p}}e^{2/ ((p-2)(p+1))}.
\end{align*}  
Thus,
\begin{align}
 \sum_{\substack{h=2\\ 2\mid h}}^{[r]} 
 \prod_{2<p\mid h}\dfrac{p-1}{p-2}
 &<\sum_{\substack{h=2\\ 2\mid h}}^{[r]}
 \prod_{2<p\mid h} \zj{1+\dfrac{1}{p}}e^{2/ ((p-2)(p+1))}\notag\\
 &=\sum_{\substack{h=2\\ 2\mid h}}^{[r]}
 e^{\sum_{2<p\mid h} 2/((p-2)(p+1))}\prod_{2<p\mid h}\zj{1+\dfrac{1}{p}}.\lb{thm01/02}
\end{align}  
Let $T=20000$, then
\begin{align*}
  \sum_{2<p\mid h} &2/((p-2)(p+1))
  < \sum_{2<p} 2/((p-2)(p+1))\\
  &= \sum_{2<p\le T+1} 2/((p-2)(p+1))+ \sum_{T+2\le p} 2/((p-2)(p+1)).
\end{align*}
Using the SAGE programming language (Appendix, Program 2), we get
\[
\sum_{2<p\le T+1} 2/((p-2)(p+1)) < 0.7271.
\]
%9. oldal
Furthermore,
\begin{align*}
  \sum_{T+2\le p} 2/((p-2)(p+1))
  &<\sum_{n=T+2}^{\infty}  2/((n-2)(n+1))\\ 
  &< \sum_{n=T+2}^{\infty}  2/((n-2)(n-1))\\
  &< \sum_{n=T+2}^{\infty} \zj{\dfrac{2}{n-2}-\dfrac{2}{n-1}}\\
  &= \dfrac{2}{T}=0.0001.
\end{align*}
So,
\begin{align*}
  \sum_{2<p\mid h} 2/((p-2)(p+1))<0.7272.
\end{align*}  
Thus,
\begin{align*}
  e^{\sum_{2<p\mid h} 2/((p-2)(p+1))} <e^{0.7272}<2.0693.
\end{align*}  
Writing this in inequality \eqref{thm01/02}, we get
\begin{align}
  \sum_{\substack{h=2\\ 2\mid h}}^{[r]} 
  \prod_{2<p\mid h}\dfrac{p-1}{p-2}
  &<2.0693
  \sum_{\substack{h=2\\ 2\mid h}}^{[r]} \prod_{2<p\mid h} \zj{1+\dfrac{1}{p}}\notag\\
  &=2.0693
  \sum_{\substack{h=2\\ 2\mid h}}^{[r]} \prod_{p\mid h} \zj{1+\dfrac{1}{p}}
  /\zj{1+\dfrac{1}{2}}\notag\\
  &<1.3796 \sum_{\substack{h=2\\ 2\mid h}}^{[r]} \prod_{p\mid h}
  \zj{1+\dfrac{1}{p}}\notag\\
  &=1.3796 \sum_{\substack{h=2\\ 2\mid h}}^{[r]} \sum_{\substack{d\mid
      h\\ |\mu(d)|=1}}\frac{1}{d}\notag\\
  &=1.3796 \sum_{\substack{d=1\\ |\mu(d)|=1}}^{[r]} \dfrac{1}{d}
  \sum_{\substack{h=1\\ 2\mid h\\ d\mid h}}^{[r]} 1\notag\\
  &=1.3796\sum_{\substack{d=1\\ |\mu(d)|=1\\ d\textup{ even}}}^{[r]}
  \dfrac{1}{d}\sum_{\substack{h=1\\ d\mid h}}^{[r]} 1
  +1.3796\sum_{\substack{d=1\\ |\mu(d)|=1\\ d\textup{ odd}}}^{[r]}
  \dfrac{1}{d}\sum_{\substack{h=1\\ 2d\mid h}}^{[r]} 1\notag\\
  &\le 1.3796\sum_{\substack{d=1\\ |\mu(d)|=1\\ d\textup{ even}}}^{[r]}
  \dfrac{r}{d^2}
  + 1.3796\sum_{\substack{d=1\\ |\mu(d)|=1\\ d\textup{ odd}}}^{[r]}
  \dfrac{r}{2d^2}\notag\\
  &< 1.3796r\sum_{\substack{d=1\\ |\mu(d)|=1\\ d\textup{ even}}}^{\infty}
  \dfrac{1}{d^2}
  + 1.3796r\sum_{\substack{d=1\\ |\mu(d)|=1\\ d\textup{ odd}}}^{\infty}
  \dfrac{1}{2d^2}\lb{thm01/03}
\end{align}
%10. oldal
Let $T=5000$. Using the SAGE programming language (Appendix Programs 3 and 4)
we get
\begin{align*}
  \sum_{\substack{d=2\\ |\mu(d)|=1\\ d\textup{ even}}}^{T}
  \dfrac{1}{d^2}
  &=\sum_{\substack{d=1\\ d\textup{ even}}}^{T}
  \dfrac{\mu^2(d)}{d^2}<0.304   \\
  \sum_{\substack{d=1\\ |\mu(d)|=1\\ d\textup{ odd}}}^{T+1}
  \dfrac{1}{2d^2}
  &=\sum_{\substack{d=1\\ d\textup{ odd}}}^{T+1}
  \dfrac{\mu^2(d)}{d^2}<0.6079.\\
\end{align*}  
Moreover, 
\begin{align*}
  \sum_{\substack{d=T+2\\ |\mu(d)|=1\\ d\textup{ even}}}^{\infty}
  \dfrac{1}{d^2}&<\sum_{\substack{d=T+2\\ d\textup{ even}}}^{\infty}
  \dfrac{1}{d(d-2)}
  <\sum_{\substack{d=T+2\\ d\textup{ even}}}^{\infty}
  \dfrac{1}{2}\phantom{\cdot}\zj{\dfrac{1}{d-2}-\dfrac{1}{d}}\\
  &=\dfrac{1}{2T}=0.0001\\
  \intertext{and}
  \sum_{\substack{d=T+3\\ |\mu(d)|=1\\ d\textup{ odd}}}^{\infty}
  \dfrac{1}{2d^2}
  &<\sum_{\substack{d=T+3\\ d\textup{ odd}}}^{\infty}
  \dfrac{1}{2d(d-2)}
  <\sum_{\substack{d=T+3\\ d\textup{ odd}}}^{\infty}
  \dfrac{1}{4}\phantom{\cdot}\zj{\dfrac{1}{d-2}-\dfrac{1}{d}}\\
  &< \dfrac{1}{4T}=0.00005.
\end{align*}
% 11. oldal
Thus, by \eqref{thm01/03} we get
\begin{align*}
  \sum_{\substack{h=2\\ 2\mid h}}^{[r]} 
  \prod_{2<p\mid h}\dfrac{p-1}{p-2}
  &<1.3796r\zj{0.3041+0.60795}<1.2583r. 
\end{align*}
Writing this in \eqref{thm01/01}, we have
\[
R< 6.646 \phantom{\cdot} \dfrac{rx}{(\log x)^2},
\]
which proves  Lemma \ref{lem01}. This completes the proof of
Theorem \ref{thm01}.\qed

\bigskip \noindent\textbf{Remark.}
In Theorem \ref{thm01}, the constant factor could be further sharpened by employing more sophisticated sieve estimates. For instance, one could use the classical result of Chen \cite{Chen} or the more recent refinements by Johnston \cite{Johnston} and Lichtman \cite{Lichtman}. However, even with these high-precision improvements, the constant 0.1504 in Theorem \ref{thm01} would improve only by a few thousandths, which does not qualitatively change the nature of the result.

\section{Proof of Theorem \ref{thm02}.}

We can prove that for every integer $M$, there exist two distinct
primes greater than $M$ ($p$ and $q$), such that $p-q$ is a square number.
Throughout the proof, we fix $M$. Next, we will use the
theorem of Bloom and Maynard \cite[Theorem 1]{bloom}, who proved the
following:

\begin{lem}[Bloom-Maynard]\lb{lem03}
  There exists positive constants $N_0,c_0$ and $c_1$
  such that if $N\ge N_0$,
  $\mc A\subseteq\{1,2,\dots,N\}$ and
  \[
  \ab{\mc A} \ge c_0 \dfrac{N}{(\log N)^{c_1\log\log\log N}},
  \]
  then $\mc A$ contains two different elements whose difference
  is a square.
\end{lem}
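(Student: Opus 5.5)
Lemma~\ref{lem03} is quoted from \cite[Theorem 1]{bloom}, and in the paper it is used as a black box. If I had to reprove it, I would use the Fourier-analytic density increment method of Sárközy, in the refined form of Pintz, Steiger and Szemerédi \cite{PStSz} and Bloom and Maynard. Below, $\alpha$ denotes a frequency in $\mathbb R/\mathbb Z$.

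\textbf{Setup.} Suppose $\mathcal A\subseteq\{1,\dots,N\}$ has density $\delta=\ab{\mathcal A}/N$ and contains no two distinct elements whose difference is a nonzero square. Then
\[
\sum_{a-a'=m^2,\ 1\le m\le\sqrt N} 1_{\mathcal A}(a)1_{\mathcal A}(a')=0 .
\]
I would write this count via the circle method as $\int_0^1 \ab{\widehat{1_{\mathcal A}}(\alpha)}^2 S(\alpha)\,d\alpha$, where $S(\alpha)=\sum_{m\le\sqrt N} e(m^2\alpha)$. Next I would replace $1_{\mathcal A}$ by the balanced function $f=1_{\mathcal A}-\delta 1_{[N]}$. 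The main term $\delta^2 N^{3/2}$, restricted to a slightly shortened range, must then be cancelled by the contribution of $f$.

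\textbf{Major and minor arcs.} On the minor arcs, Weyl's inequality makes $S(\alpha)$ small, so they contribute negligibly. On a major arc near $a/q$, $S(\alpha)$ is approximately $q^{-1}G(a,q)$ times a smooth integral, where $G(a,q)$ is a Gauss sum with $\ab{G(a,q)}\ll\sqrt q$. Comparing with the main term, I would conclude that
\[
\sum_{q\le Q}\ \sum_{(a,q)=1} q^{-1/2}\int_{\ab{\alpha-a/q}\le \eta}\ab{\widehat f(\alpha)}^2\,d\alpha \gg \delta^2 N,
\]
with $Q$ and $\eta$ depending polynomially on $\delta^{-1}$.

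\textbf{Increment step.} From large Fourier mass of $f$ near the rationals with denominator $q$, I would extract a progression with common difference $q^2$ and length roughly $N/(q^2 Q^{O(1)})$ on which $\mathcal A$ has density at least $\delta(1+c)$. The difference is $q^2$ rather than $q$ because the problem is then self-similar: squares divisible by $q^2$ are exactly $q^2$ times squares. Rescaling gives a new instance with no square differences and higher density.

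\textbf{Iteration and expected difficulty.} Sárközy's version of this step only gains $\delta\mapsto\delta(1+c\delta)$, which leads to the much weaker bound $(\log N)^{-1/3}$. The crux, and the step I expect to be hardest, is to reach a \emph{constant-factor} increment with $q\le \delta^{-O(\log\log(1/\delta))}$.

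\emph{Approach to the crux.} The Fourier mass may be spread over many denominators $q$ at once, and the Gauss-sum weights $q^{-1/2}$ are too lossy to handle them one by one. I would try to combine them as follows:
\begin{itemize}
\item First, treat the set $\Lambda$ of rationals $a/q$ carrying large mass as a large spectrum. Apply a Chang-type dissociativity lemma, or a bound on the additive energy of $\Lambda$, to show that $\Lambda$ is essentially generated by few denominators.
\item Second, use the least common multiple of those few denominators as a single modulus $q$. This produces a constant-factor increment at a cost of only $\log\log(1/\delta)$ in the exponent of $q$.
\end{itemize}

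\emph{Iteration count.} The density can double only $O(\log(1/\delta))$ times before exceeding $1$. Each step replaces $N$ by roughly $N\delta^{O(\log\log 1/\delta)}$. The iteration can continue as long as $\log N\gg \log(1/\delta)^2\log\log(1/\delta)$, or a similar quantity, and it must eventually exceed density $1$. Solving this inequality for $\delta$ should give the threshold
\[
\delta\gg (\log N)^{-c_1\log\log\log N}.
\]
This is the bound claimed in Lemma~\ref{lem03}.
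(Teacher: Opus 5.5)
The paper gives no proof of this lemma. It imports the statement from \cite[Theorem 1]{bloom}, remarking that \cite{PStSz} would also serve, so your opening sentence is the whole of what the paper relies on. Read as a reproof, however, your outline has a genuine gap exactly at the step you call the crux. The constant-factor increment with $q\le\delta^{-O(\log\log(1/\delta))}$ is asserted, not derived, and the route you propose does not deliver it.

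A Chang-type dimension bound, at the Fourier thresholds that occur here, is only polynomial in $1/\delta$. So the lcm of the denominators of a spanning dissociated set may be as large as $\exp(\delta^{-O(1)})$, and constant-factor steps with such moduli give only a bound of the shape $(\log N)^{-c}$. Nor does anything in your setup force the mass onto few denominators. Your major-arc inequality and the large sieve are compatible with the mass being spread over the primes $q\sim R$, for $R$ up to about $\delta^{-2}$, each prime carrying (summed over $a$) arc-mass about $\delta^2NR^{-1/2}\log R$. For the lcm step to produce a constant-factor increment modulo $q$, the mass at denominators dividing $q$ must be $\gg\delta^2N$. In this configuration that means $q$ must be divisible by $\gg R^{1/2}/\log R$ of these primes, which forces $\log q\gg R^{1/2}$, possibly as large as $\delta^{-1}$.

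Separately, your iteration count is wrong. You have $O(\log(1/\delta))$ doublings, each costing $O(\log(1/\delta)\log\log(1/\delta))$ in $\log N$. The iteration is forced to stop only when $\log N\ll\log^2(1/\delta)\log\log(1/\delta)$, which gives $|\mathcal A|\ll N\exp(-c\sqrt{\log N/\log\log N})$, not $(\log N)^{-c_1\log\log\log N}$. That is far stronger than the Bloom--Maynard theorem you are trying to reprove. So an increment of the strength you posit cannot be a routine consequence of the tools you list.

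Working backwards, the lemma's bound corresponds to constant-factor increments with moduli allowed as large as $\log q\approx\delta^{-C/\log\log(1/\delta)}$. Then $O(\log(1/\delta))$ steps cost $\log N\approx\delta^{-C'/\log\log(1/\delta)}$, which is exactly $\delta\approx(\log N)^{-c\log\log\log N}$. To get even this much weaker increment, one has to control how the major-arc mass is distributed over denominators, and how large a modulus is needed to collect it. That is where the substance of \cite{PStSz} and \cite{bloom} lies, and your outline does not supply it.
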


It is important to emphasize that Lemma \ref{lem03} could be replaced 
by the density theorem of Pintz, Steiger, and Szemerédi \cite{PStSz}. 
Their result was the first to provide a sufficiently strong upper bound 
for the size of a set lacking square differences to be applicable in 
the context of prime gaps. In the following proof, we use the Bloom--Maynard 
formulation for its convenient explicit constants, but the underlying 
arithmetic conclusion remains consistent with the Pintz--Steiger--Szemerédi 
approach.

Next, we choose constants $N_0,c_0, c_1$ which satisfy Lemma \ref{lem03}.
Moreover, let $N\ge N_0$ and $\mc P$ be the set of primes between $M+1$ and $N$.
Then, if $N$ is sufficiently large depending on $M$ and $c_0,c_1$,
we can apply the prime number theorem:
\[
\ab{\mc P}
\ge 0.9\dfrac{N}{\log N}  \ge c_0 \dfrac{N}{(\log N)^{c_1\log\log\log N}}.
\]
So, using Lemma \ref{lem03}, we proved that there exist
two distinct elements $p\in\mc P$ and $q\in\mc P$,
for which $p-q$ is a square number. Then, by the definition of $\mc P$,
$p,q$ are primes and $M< q<p$, which completes the proof of the statement.
\qed

%12. oldal
\section{Proofs of Theorems \ref{thm03} and \ref{thm04}.}

The definition of the admissible set was proposed during the
development of the well-known Dickson conjecture,
which is as follows.

\begin{dfn}
We say that the set
\{$g_1x+h_1,g_2x+h_2,\dots g_kx+h_k\}\subseteq \mathbb Z[x]$
is admissible if
\[
\prod_{i=1}^{k} (g_ix+h_i)
\]
has no fixed prime divisor $p$, i.e.,
\[
\ab{\{n\pmod{p}:\ \ \prod_{i=1}^{k} (g_in+h_i)\equiv 0\pmod{p}\}}<p
\]
holds for all primes $p$.
\end{dfn}
If we consider the case $g=1$, this definition states that the 
set $\{x+h_1,x+h_2,\dots,x+h_k\}\subset \mathbb Z[x]$ is admissible 
if and only if for every prime $p$, the set $\{h_1,h_2,\dots,h_k\}$ 
does not include a complete residue system modulo $p$.

\begin{conj}[Dickson's conjecture]\lb{conj01}
  If the set $\{g_1x+h_1,g_2x+h_2,\dots g_kx+h_k\}\subseteq \mathbb Z[x]$
  is admissible, then there are infinitely many positive integers $n$
  for which every element of the set $\{g_1n+h_1,g_2n+h_2,\dots g_kn+h_k\}$
  is prime.
\end{conj}

Dickson's conjecture remains unsolved; however, one of the most significant 
breakthroughs towards this open problem is attributed to Maynard 
\cite{Maynard1} and Tao. Their work established the existence of bounded 
gaps between primes for any $k$-tuple, significantly advancing the earlier 
pioneering results in this direction. The following formulation of their 
result was given by Granville \cite[Theorem 6.4.]{Granville}.

\begin{lem}[Maynard-Tao]\lb{lem04}
  For every $m\in \mathbb N$, there exists a number $k_m$
  depending only on $m$ such that if $k\ge k_m$ and the set
  $\{g_1x+h_1,\dots,g_kx+h_k\}$ is admissible, then there exist
  infinitely many positive integers $n$ for which the
  set $\{g_1n+h_1,\dots,g_kn+h_k\}$ contains at least $m$ primes.
\end{lem}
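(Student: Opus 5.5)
Since this is a deep theorem that the paper only quotes, the natural plan is to reproduce Maynard's multidimensional sieve for general linear forms, as in Granville's exposition. Fix $m$ and an admissible set $\mathcal L=\{L_i(x)=g_ix+h_i\}_{i\le k}$. By admissibility, for each prime $p$ we can choose a residue $\nu_p$ with $\prod_i L_i(\nu_p)\not\equiv 0\pmod p$. We also assume, after discarding finitely many primes, that $p\nmid g_i$ for the relevant $p$.

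\textbf{Setup (W-trick).} Put $W=\prod_{p\le D_0}p$ with $D_0=\log\log\log N$. By the Chinese remainder theorem we restrict to $n\equiv \nu_0\pmod W$, so that every $L_i(n)$ is coprime to $W$. For $N<n\le 2N$ we introduce the weights
\[
w_n=\Bigl(\sum_{d_i\mid L_i(n)\ \forall i}\lambda_{d_1,\dots,d_k}\Bigr)^2 ,
\]
where the $\lambda$ are supported on squarefree $d=d_1\cdots d_k<R=N^{\theta/2-\epsilon}$ with $(d,W)=1$. They are chosen through a smooth function $F$ supported on the simplex $\{t_i\ge 0,\ \sum t_i\le1\}$, via $\lambda_{\mathbf d}\approx\mu(\mathbf d)\,F(\log d_1/\log R,\dots)$ after the usual change of variables $y_{\mathbf r}$.

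\textbf{Two sums.} We compute
\[
S_1=\sum_n w_n, \qquad S_2=\sum_n w_n\sum_{i=1}^k \mathbf 1_{\mathcal P}(L_i(n)).
\]
$S_1$ is elementary. For $S_2$, the $i$-th term counts primes $L_i(n)$ for $n$ in arithmetic progressions modulo $W\prod_j[d_j,e_j]$. Here we need a Bombieri--Vinogradov theorem for primes of the form $g_in+h_i$, that is, for primes in progressions modulo $g_iq$. Since $g_i$ is fixed, this follows from the usual Bombieri--Vinogradov theorem with level $\theta=1/2$. The outcome is
\[
S_1\sim \frac{\varphi(W)^k N(\log R)^k}{W^{k+1}}\,I_k(F), \qquad S_2\sim \frac{\varphi(W)^k N(\log R)^{k+1}}{W^{k+1}\log N}\sum_i J_k^{(i)}(F),
\]
where $I_k$ and $J_k^{(i)}$ are the usual integrals of $F^2$ and of $\bigl(\int F\,dt_i\bigr)^2$.

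\textbf{Conclusion.} Take $\rho=m-1$ and consider $S_2-\rho S_1$. If this is positive for large $N$, then some $n$ has more than $m-1$, hence at least $m$, of the $L_i(n)$ prime. Letting $N\to\infty$ over dyadic ranges gives infinitely many such $n$. Positivity holds once
\[
\frac{\theta}{2}\,M_k>m-1, \qquad M_k=\sup_F\frac{\sum_iJ_k^{(i)}(F)}{I_k(F)}.
\]

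\textbf{Main obstacle.} The hard step is the variational lower bound $M_k\ge \log k-2\log\log k-2$, so that $M_k\to\infty$ and a suitable $k_m$ exists. I would try the product test function $F(\mathbf t)=\prod_i g(kt_i)$ with $g(u)=1/(1+Au)$ on $[0,T]$, restricted to the simplex. The constraint $\sum t_i\le1$ is then handled by a concentration (Chebyshev) estimate for sums of independent variables with density proportional to $g^2$. A secondary technical point is checking that the error terms for general $g_i$ stay uniform; this needs care when $p\mid g_i$ for small $p$, which is absorbed into $W$.
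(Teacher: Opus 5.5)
Your outline is correct and follows essentially the same route as the paper, which does not prove this lemma itself but quotes it from Granville's exposition of the Maynard--Tao sieve. That source uses the $W$-trick (with $W$ eventually absorbing every prime dividing the $g_i$ or the nonzero $g_ih_j-g_jh_i$), Bombieri--Vinogradov for progressions modulo $g_iq$, the criterion $\frac{\theta}{2}M_k>m-1$ with $\theta=1/2$, and Maynard's product test function forcing $M_k\to\infty$, so that $k_m$ depends only on $m$.

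The one slip is notational. It is $y_{\mathbf r}$, not $\lambda_{\mathbf d}$, that is set equal to $F(\log r_1/\log R,\dots,\log r_k/\log R)$; $\lambda_{\mathbf d}$ is then roughly $\mu(\mathbf d)$ times a $k$-fold integral of $F$, which is why $I_k$ and $J_k^{(i)}$ come out as the integrals of $F^2$ and $(\int F\,dt_i)^2$ that you state.
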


\begin{dfn}
  Denote the smallest constant $k_m$ for which Lemma \ref{lem04}
  holds by $K_m$.  
\end{dfn}  
%13. oldal
Maynard \cite{Maynard1} and Tao also gave an upper bound for $K_m$;
namely, they proved that
\begin{equation}
K_m\log K_m < e^{8m+4}. \lb{km01}
\end{equation}
This inequality follows from the methods established by Maynard and Tao
\cite{Maynard1};
however, for a version that provides this explicit form, we refer to the exposition by Granville \cite[Theorem 6.4]{Granville}.

Later, Maynard \cite[Propositions 4.2. and 4.3.]{Maynard1}
proved that $K_2\le 105$ and $K_m\le cm^2e^{4m}$.
The sharpest upper bounds were proved in the
framework of the Polymath Project \cite[Theorem 3.2]{Poly2}:
\begin{align}
  K_2&\le 50, \lb{km02}\\
  K_m&\le ce^{(4-28/157)m} \lb{km03}.
\end{align}  

In order to prove Theorems 3 and 4, I will use estimates
\eqref{km01} and \eqref{km02}. The main tool in the proofs is the
theorem of Banks, Freiberg, and Turnage-Butterbaugh \cite{banks}.

\begin{lem}[Banks, Freiberg, Turnage-Butterbaugh]\lb{lem05}
  Let $m,k\in\mathbb N$, $m\ge 2$, $k\ge K_m$. Furthermore,
  let $\{x+b_1,\dots, x+b_k\}\subset\mathbb Z[x]$ be an admissible set
  and $g\in\mathbb N$ such that $\textup{gcd}(g,b_1\cdots b_k)=1$.
  Then, there is a subset $\{h_1,\dots,h_m\}\subset\{b_1,\dots,b_k\}$
  such that there exist infinitely many $r\in\mathbb N$ for which
  the elements of $\{gr+h_1,\dots,gr+h_m\}$ are consecutive primes.
\end{lem}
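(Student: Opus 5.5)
Since Lemma \ref{lem05} is quoted from \cite{banks}, I only sketch how I would derive it from Lemma \ref{lem04}. The idea is to make every integer between the $gr+b_i$ composite by congruence conditions, then apply Lemma \ref{lem04} to the resulting admissible family of linear forms, and finally use pigeonholing to fix which forms are prime.

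\emph{Step 1: sieve out the gaps.} Order $b_1<\dots<b_k$ and let $J=\{j\in\mathbb Z:\ b_1<j<b_k,\ j\notin\{b_1,\dots,b_k\}\}$. For each $j\in J$ choose a distinct prime $p_j$ with $p_j\nmid g$ and $p_j>\max(k,\,b_k-b_1)$. Since $\gcd(g,p_j)=1$, the congruence $gr+j\equiv 0\pmod{p_j}$ fixes a residue class $r\equiv a_j\pmod{p_j}$. By the Chinese remainder theorem there is a class $r\equiv a\pmod Q$, where $Q=\prod_{j\in J}p_j$, satisfying all of these simultaneously. For such $r$ every $gr+j$ with $j\in J$ is divisible by $p_j$, hence composite once $r$ is large.

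\emph{Step 2: admissibility of the new forms.} Write $r=Qs+a$ and consider $L_i(s)=gQs+(ga+b_i)$, $i=1,\dots,k$. I would check that $\{L_1,\dots,L_k\}$ is admissible prime by prime.
\begin{itemize}
\item For $p\nmid gQ$: each $L_i$ has exactly one root mod $p$, namely $s\equiv-(ga+b_i)(gQ)^{-1}$. These roots are the images of the $b_i$ under an affine bijection, so they fail to cover $\mathbb Z/p$ because $\{b_i\}$ does, by the admissibility of $\{x+b_i\}$.
\item For $p\mid g$: $L_i(s)\equiv b_i\not\equiv 0\pmod p$ by the hypothesis $\gcd(g,b_1\cdots b_k)=1$.
\item For $p=p_j$: $L_i(s)\equiv ga+b_i\equiv b_i-j\pmod{p_j}$. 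This is nonzero because $0<|b_i-j|<p_j$.
\end{itemize}
So no prime is a fixed divisor.

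\emph{Step 3: apply Maynard--Tao and pigeonhole.} Since $k\ge K_m$, Lemma \ref{lem04} gives infinitely many $s$ for which at least $m$ of the $L_i(s)$ are prime. There are only finitely many subsets of $\{1,\dots,k\}$, so some fixed set $S$ with $|S|\ge m$ is exactly the set of prime values for infinitely many $s$. For such $s$, every integer strictly between $gr+b_1$ and $gr+b_k$ is composite, except possibly the values $gr+b_i$ with $i\in S$. The only other candidates are $gr+b_i$ with $i\notin S$, which are composite by the choice of $S$, and $gr+j$ with $j\in J$, which are divisible by $p_j$. Hence the primes $gr+b_i$, $i\in S$, are consecutive primes. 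Taking any $m$ consecutive elements $h_1<\dots<h_m$ of $\{b_i: i\in S\}$ gives the claim, with $r=Qs+a$ ranging over an infinite set.

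The main obstacle is Step 2, namely choosing the auxiliary primes $p_j$ so that the forced divisibilities do not destroy admissibility. Taking $p_j$ larger than $b_k-b_1$ and coprime to $g$ is what makes it work. Everything else is Lemma \ref{lem04} plus finite pigeonholing.
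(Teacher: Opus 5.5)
The paper gives no proof of this lemma. It is quoted from \cite{banks} and used as a black box, so there is no internal argument to compare yours against.

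Your deduction from Lemma~\ref{lem04} is correct and complete. The three checks that matter all go through:
\begin{itemize}
\item Choosing the auxiliary primes with $p_j\nmid g$ and $p_j>b_k-b_1$ gives $L_i(s)\equiv b_i-j\not\equiv 0\pmod{p_j}$, because $0<|b_i-j|<p_j$.
\item The cases $p\mid g$ and $p\nmid gQ$ are handled exactly as you say.
\item Pigeonholing on the \emph{exact} set $S$ of indices with prime values is what excludes interleaved primes $gr+b_i$ with $i\notin S$. Pigeonholing only on some $m$ indices that happen to be prime would not give consecutiveness.
\end{itemize}
This CRT-plus-pigeonhole reduction is the standard way the lemma is derived from the Maynard--Tao theorem.

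Two small remarks. First, the condition $p_j>k$ is never used; $p_j>b_k-b_1$ and $p_j\nmid g$ suffice. Second, your reduction applies Lemma~\ref{lem04} to forms with the nontrivial common leading coefficient $gQ$. This happens even when $g=1$, as in the paper's application. So you rely on the general linear-form version of Lemma~\ref{lem04}, not just the version for monic tuples $x+h_i$. That is how the paper states Lemma~\ref{lem04} and defines $K_m$, so there is no gap.

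What your argument adds is an explicit check that Lemma~\ref{lem05} holds with the same threshold $K_m$ as Lemma~\ref{lem04}. This is exactly what the paper needs when it later takes $k=K_m$.
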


We will use this lemma for a properly constructed set
$\{b_1,\dots,b_{K_m}\}$. This set will be of the form
$\{Wa,2Wa,\dots,K_mWa\}$, where $a$ and
$W$ are defined in the following lemma.

\begin{lem}\lb{lem06}
  Let $m\ge 2$ be an integer. Define $W$ by
  \begin{equation}
  W=\prod_{\substack{p\le K_m\\ \textup{p prime}}}p\lb{W}.
  \end{equation}
  Then, there exist an $0<a\in\mathbb N$,
  for which all elements of
  \[
  \{Wa,2Wa,\dots,(K_m-1)Wa\}
  \]
  are perfect power. Furthermore, we also have 
  \begin{align}
    a &< \scaleto{e^{e^{e^{10.5m}}}}{20pt}, \lb{lem06a}
    \intertext{and, in the special case of $m=2$ we also have}
    a &< \scaleto{10^{1.8339\cdot 10^{76}}}{14pt}.\lb{lem06b}
  \end{align}
\end{lem}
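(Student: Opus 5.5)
We need $a$ such that $jWa$ is a perfect power for every $j=1,\dots,K_m-1$. The plan is the classical Chinese-remainder-on-exponents trick. Let $n=K_m-1$ and write $c_j=jW$. Let $p_1,\dots,p_s$ be the primes dividing $\prod_{j\le n} c_j$; these are exactly the primes $\le K_m$, since every prime dividing some $j\le n$ already divides $W$. Choose $n$ distinct primes $q_1,\dots,q_n$ to serve as exponents, for instance the first $n$ primes. We look for $a=\prod_{i=1}^{s}p_i^{e_i}$ such that $q_j \mid v_{p_i}(c_j)+e_i$ for every $i$ and $j$. Then $c_ja$ is a $q_j$-th power, hence a perfect power. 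For each fixed $i$ the conditions read $e_i\equiv -v_{p_i}(c_j)\pmod{q_j}$ for $j=1,\dots,n$. Since the moduli $q_j$ are distinct primes, the Chinese remainder theorem gives a solution $0< e_i\le Q:=q_1\cdots q_n$; we may take $e_i=Q$ if the residue is $0$, which guarantees $a\ge 2>0$.

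This gives the bound
\[
a\le \prod_{p\le K_m}p^{Q}=W^{Q}=e^{Q\,\theta(K_m)}.
\]
Here $Q$ is the product of the first $K_m-1$ primes, so $\log Q=\theta(p_{K_m-1})\le \theta(2K_m\log K_m)<2.1K_m\log K_m$ by Rosser--Schoenfeld type explicit estimates. Also $\theta(K_m)<1.02K_m$. Substituting \eqref{km01}, namely $K_m\log K_m<e^{8m+4}$, we get
\[
\log a < 1.02K_m\, e^{2.1e^{8m+4}}.
\]
Hence $\log\log a<2.1e^{8m+4}+\log(1.02K_m)$ and $\log\log\log a< 8m+4+O(1)$. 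This is below $10.5m$ for all $m\ge 2$ once the constants are checked, which proves \eqref{lem06a}.

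For \eqref{lem06b} we use $K_2\le 50$ from \eqref{km02}, so $n=49$ and $W=\prod_{p\le 50}p$, which has $15$ prime factors. Computing $\log_{10}a\le Q\log_{10}W$ directly would give a bound far worse than claimed: the product of the first $49$ primes is about $10^{90}$. The way to save is to use fewer exponent primes. Any $c_ja$ that is a perfect $q$-th power is fine, and several $j$ may share the same exponent prime $q$ as long as the congruences $e_i\equiv -v_{p_i}(c_j)\pmod q$ agree for all $j$ sharing that $q$. So the $j$'s should be grouped according to the vector $(v_{p_i}(c_j) \bmod q)_i$. Even better, since $K_2$ may be less than $50$, I would exploit that we only need $\{Wa,\dots,(K_m-1)Wa\}$ with the true (unknown) $K_m$. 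Taking $K_m=50$ is the worst case, and monotonicity is fine because the statement for larger $n$ implies it for smaller $n$. Then $Q$ is minimized and $\log_{10}a=\sum_i e_i\log_{10}p_i$ is evaluated numerically, presumably in SAGE, to reach $1.8339\cdot10^{76}$. This step is the main obstacle. I would first try the straightforward choice $q_j=$ the $j$-th prime together with the CRT minimal solutions $e_i$, computed exactly rather than bounded by $Q$. The exact least residues could be much smaller than $Q$, and only if that fails would I turn to the grouping refinement.
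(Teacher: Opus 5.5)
\eqref{lem06a} is correct, and \eqref{lem06b} has a genuine gap.

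Your proof of \eqref{lem06a} uses the paper's construction: the Chinese remainder theorem on the exponents, with one distinct prime exponent per $j$. (The paper uses the first $K_m$ primes for $i=1,\dots,K_m$.) Your estimate is actually tighter than the paper's. Combining $p_{K_m-1}\le 2K_m\log K_m$ with $K_m\log K_m<e^{8m+4}$ gives $\log\log\log a<8m+4+\log 2.1+o(1)$. This is already below $10.5m$ at $m=2$, with margin $1-\log 2.1\approx 0.26$. The paper instead uses the cruder bound $K(\log K+\log\log K)<e^{8m+4}(8m+4)$, which only yields \eqref{lem06a} for $m\ge 3$; its case $m=2$ follows from \eqref{lem06b}.

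The gap is in \eqref{lem06b}. Your first plan is to use $49$ distinct exponent primes and compute the least CRT residues exactly; this provably cannot work. For any prime $p_i\le 47$ we have $v_{p_i}(jW)=v_{p_i}(j)+1$, so $e_i\equiv -1 \pmod{q_j}$ for every $j$ with $p_i\nmid j$. Take the prime $47$: the only $j\le 49$ divisible by $47$ is $j=47$. So $e+1$ is divisible by $48$ of the $49$ moduli, which forces $e\ge Q/q_{47}-1=Q/211-1\approx 10^{86.6}$ and hence $\log_{10}a>10^{86}$. The least residues are therefore not small, and the bound $a\le W^Q$ is essentially sharp. The only lever is to shrink the set of exponent primes. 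That is exactly the step you leave as an unspecified ``grouping refinement'', and you also omit the numerical check.

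The paper makes this step concrete:
\begin{itemize}
\item It requires $Wa$ to be a square, which handles $j=1,4,9,16,25,49$ at once ($36$ could be added too).
\item It requires $3Wa$ to be a cube, which handles $j=3$ and $j=24=8\cdot 3$.
\item It gives each of the remaining $41$ values of $j$ its own prime among $5,7,\dots,p_{43}=191$.
\end{itemize}
Every exponent then satisfies congruences modulo the first $43$ primes only. So each exponent is $<E=\prod_{i\le 43}p_i$, and $\log_{10}a\le E\log_{10}\prod_{p\le 47}p\approx 1.83383\cdot 10^{76}$. This is exactly where the constant $1.8339\cdot 10^{76}$ comes from.

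You also do not need your monotonicity argument in $K_2$. As stated, it ignores that $W$ changes with $K_2$, so you would have to replace $a$ by $aW'/W$. The CRT construction works verbatim for any $W$ supported on the primes $\le 47$, and the bound $a\le(\prod_{p\le 47}p)^E$ does not depend on which such $W$ occurs.
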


%14. oldal
\bigskip\noindent\textbf{Proof of Lemma \ref{lem06}.}
First we prove \eqref{lem06a}. Let $s$ be the largest integer such that 
$p_s \le K_m$, where $p_1, p_2, \dots, p_s$ denote the sequence of primes 
in increasing order. In other words, $p_1, \dots, p_s$ lists all the 
primes in the interval $[2, K_m]$.
Then, $W=p_1p_2\cdots p_s$. We also need the
first $K_m$ primes, which are $p_1=2,p_2=3,\dots,p_{K_m}$.
Next, we will find an integer $a$, which is of the form
\[
a=p_1^{\alpha_1}\cdots p_s^{\alpha_s},
\]
$a$ is as small as possible, and
for $1\le i\le K_m$, $iWa$ is a perfect $p_i$-th power
(i.e., it is of the form $iWa=x^{p_i}$, where $0<x\in\mathbb N$).
This determines the remainder of the numbers
$\alpha_1,\alpha_2,\dots,\alpha_s$ modulo $p_i$ for each $1\le i \le K_m$.
That is, for a fixed $j$, the remainder of an $\alpha_j$ is given
for each of the moduli $p_1,p_2,\dots,p_{K_m}$.
Thus, $\alpha_j$ satisfies a simultaneous congruence system,
where the moduli $p_1,p_2,\dots,p_{K_m}$
are pairwise relative primes. According to
the Chinese remainder theorem, such an $\alpha_j$ exists, and
for the smallest $\alpha_j$ with this property, we have
\[
0\le \alpha_j\le p_1p_2\cdots p_{K_m}.
\]
Thus,
\[
a=p_1^{\alpha_1}\cdots p_s^{\alpha_s}
\le (p_1\dots p_s)^{p_1p_2\dots p_{K_m}}.
\]
According to Rosser and Schoenfeld's result \cite[Theorem 9]{Rosser},
$\prod_{p\le x} p< e^{1.01624x}$, so
\begin{align}
  W &=\prod_{p\le K_m}p=p_1p_2\cdots p_s
  < e^{1.01624K_m}\le \scaleto{e^{1.01624e^{8m+4}}}{14pt}.\lb{w01}
\end{align}
We note that while this bound has been significantly improved in recent years (see e.g., \cite{Bordignon}), the classical bound is sufficient for our purposes.
On the other hand, according to Dusart's result \cite[Lemma 1]{Dusart},
$p_k\le k(\log k+\log\log k)$, if $k\ge 6$,
we can also estimate $a$. Let $K=\max\{K_m,6\}$. Then,
\begin{align}
  a &\le \scaleto{\zj{e^{1.01624K_m}}^{p_1\dots p_{K_m}}}{18pt}\notag\\
  &=\scaleto{e^{1.01624K_m{p_1\dots p_{K_m}}}}{14pt}\notag\\
  &<\scaleto{e^{1.01624K\cdot e^{1.01624K(\log K+\log\log K)}}}{16pt}.\notag
\end{align}
%15. oldal
Here, by \eqref{km01} we have $K\log K<e^{8m+4}$, so
$K<e^{8m+4}$ and $\log K+\log\log K < 8m+4$.
Thus, 
\begin{align}
  a &< \scaleto{e^{1.01624e^{8m+4}e^{1.01624 e^{8m+4}(8m+4)}}}{20pt}\lb{becs01}\\
    &< \scaleto{e^{e^{e^{10.5m}}}}{20pt},\ \ \ \textup{if }m\ge 3.\notag 
\end{align}
Note that for $m=2$ the upper estimate in \eqref{becs01} is approximately
\[
\scaleto{10^{2.0407 \cdot 10^{4282528195}}}{14pt},
\]
which is an order of magnitude worse than the upper estimate given
in \eqref{lem06b}.

Next, we prove \eqref{lem06b}. This requires slightly more precise
calculations, for which I used the SAGE programming language again.
In the proof, we use the fact that $K_2\le 50$ (see \eqref{km02}). Let us list the primes
between $1$ and $50$: $p_1=2,p_2=3,\dots,p_{15}=47$.
Then, $W\le p_1\cdots p_{15}$.
Now, we will find an integer $a$ of the form
\begin{equation}
a=p_1^{\alpha_1}p_2^{\alpha_2}\cdots p_{15}^{\alpha_{15}},\lb{acska}
\end{equation}
such that for every $1\le i\le 49$, $iWa$ is a perfect
power. To do this, we divide the set $\{1,2,\dots,49\}$ into three parts:
\begin{align*}
  \mc H_1&=\{1,4,9,16,25,49\},\\
  \mc H_2&=\{3,24\},\\
  \mc H_3&=\{1,2,\dots,49\}\setminus(\mc H_1\cup\mc H_2).
\end{align*}  
Then, $|\mc H_3|=41$. Let us define a bijection $f$ between the elements
of $\mc H_3$ and the prime numbers $\{p_3=5,p_4=7,p_5=11,\dots,p_{43}\}$.
We will construct a positive integer $a$ for which:
\begin{description}
\item $Wa$ is a square,
\item $3Wa$ is a cube,
\item and for $i\in\mc H_3$, $iWa$ is an $f(i)$-th power, 
  (i.e., it is of the form $iWa=x^{f(i)}$, where $0<x\in\mathbb N$ and
  $f(i)$ is a prime in the set $\{p_3,p_4,\dots,p_{41}\}$).
\end{description}
% 16. oldal
Then, for every $i\in \{1,2,\dots,49\}$, $iWa$ is a perfect power
since for $i\in\mc H_1$, $iWa$ is a square, for $i\in\mc H_2$,
$iWa$ is a cube, and
for $i\in\mc H_3$, $iWa$ is an $f(i)$-th power. As before, now, each
exponent $\alpha_j$ in the prime factorization of the integer $a$
given in \eqref{acska} must satisfy a
simultaneous congruence system, where the moduli are the first 43 primes.
Since these moduli are also pairwise relative primes, we can use
the Chinese remainder theorem, and get
\[
\alpha_j< \prod_{j=1}^{43} p_j\stackrel{\textrm{def}}{=}E.
\]
Then,
\[
a\le (p_1\cdots p_{15})^{E}.
\]
I calculated this value using the SAGE programming language
(Appendix, Program 5), and got the following result:
\begin{align}
W&\le p_1p_2\cdots p_{15}<10^{17,789}\lb{W2}\\  
a&<\scaleto{10^{1.8339\cdot 10^{76}}}{14pt}.\notag
\end{align}
This completes the proof of Lemma \ref{lem06}.\qed

\bigskip Now, we can prove Theorems \ref{thm03} and \ref{thm04}.
In the case of the proof of Theorem \ref{thm03}, let $m=2$.
In Theorems \ref{thm03} and \ref{thm04}, we look for an
infinite number of consecutive primes $p_{n+1},p_{n+2},\dots,p_{n+m}$
such that for $1\le i<j\le m$, $p_{n+j}-p_{n+i}$ is a perfect power, and
\begin{align*}
p_{n+m}-p_{n+1} &< \scaleto{e^{e^{e^{10.6m}}}}{20pt},\ \ \textup{if }m\ge 3,
\intertext{respectively,}
p_{n+2}-p_{n+1} &<\scaleto{10^{1.834\cdot 10^{76}}}{14pt} , \ \ \textup{if }m=2.
\end{align*}

For this, consider the $a$ and $W$ given in Lemma \ref{lem06}.
Then, every element of the set $\{Wa,2Wa,\dots,(K_m-1)Wa\}$ is a
perfect power. Next, we would like to apply Lemma \ref{lem05} 
with $k=K_m$, $\{b_1,b_2,\dots,b_{k}\}=\{Wa,2Wa,\dots,K_mWa\}$ and $g=1$.
% 17. oldal
Then, the set $\{x+b_1,\dots,x+b_{K_m}\}$ is admissible
since $\{Wa,2Wa,\dots,K_mWa\}$ does not contain a complete residue system
for any prime $p$. This is because if $p\le K_m$, then by $p\mid W$,
every element of the set is congruent with $0$ modulo $p$, and if $p>K_m$,
then the set has fewer elements than $p$. By Lemma \ref{lem05},
there is a subset $\{h_1,h_2,\dots,h_m\}\subset \{Wa,2Wa,\dots,K_mWa\}$
for which there exists infinitely many $r$ such that $r+h_1,r+h_2,\dots,r+h_m$
are consecutive primes. Let
\[
p_{n+1}=r+h_1,\dots,p_{n+m}=r+h_m .
\]
Then, for $1\le i< <j\le m$,
\[
p_{n+j}-p_{n+i}=h_j-h_i\in\{Wa,2Wa,\dots,(K_m-1)Wa\},
\]
such that $p_{n+j}-p_{n+i}$ is a perfect power.
On the other hand, for the $a$ given in Lemma \ref{lem05},
we have \eqref{lem06a}
or \eqref{lem06b}, and for $W$, we have \eqref{w01} or \eqref{W2}; thus,
\begin{align*}
  p_{n+m}-p_{n+1} &\le (K_m-1)Wa
  <  K_mW\scaleto{e^{e^{e^{10.5m}}}}{20pt}\\
  &<e^{8m+4}e^{1.01624e^{8m+4}}\scaleto{e^{e^{e^{10.5m}}}}{20pt}
  <\scaleto{e^{e^{e^{10.6m}}}}{20pt},\ \ \textup{if }m\ge 3, 
  \intertext{and for $m=2$, we have}
    p_{n+2}-p_{n+1} & \le (K_2-1)Wa
  < K_2W\scaleto{10^{1.8339\cdot 10^{76}}}{14pt}\\
  &< 50\cdot{10^{17.781}} \scaleto{10^{1.8339\cdot 10^{76}}}{14pt}
  <\scaleto{10^{1.834\cdot 10^{76}}}{14pt}.
\end{align*}
This completes the proofs of Theorems \ref{thm03} and \ref{thm04}.\qed

\bigskip\noindent\textbf{Remark.} If we could find $50$ distinct
squares such that the difference between any two is also a square, then
Problem \ref{prob01} could be proved using the method described here.
However, I strongly doubt that such a set of $50$ elements exists.
In fact, I do not think there is even a set of five elements with
the above property (see Conjecture 4 in \cite{Gyarmati}).
It is also worth mentioning that based on the Bombieri-Lang conjecture,
an absolute constant upper bound can be given for the size of sets of
integers where the difference between any two numbers is a square
(for example, see the Note after Theorem 4.6 in \cite{Alon}).
Unfortunately, we do not know whether this constant is
greater than or less than $50$.

% 18. oldal
\section{Proof of Theorem \ref{thm05}.}

We can assume that $x\ge 17$ since the theorem is trivial
for $2\le x\le 16$. Let $D(x)$ denote the number of elements of the
largest set in $[1,x]$ that do not have two elements whose difference is
a square, and let $D_P(x)$ denote the number of elements of the largest
set of primes in $[1,x]$ that do not have two elements whose difference is
a square. Ruzsa \cite{Ruzsa}, in disproving a conjecture of Erdős,
proved the following:

\bigskip\noindent\textbf{Theorem B.}
\[
D(x)>\frac{1}{65}x^{\gamma},
\]
\textit{where}
\[
\gamma=\frac{1}{2}\zj{1+\frac{\log 7}{\log 65}}= 0.733077\dots\ .
\]

\bigskip\noindent In fact, Ruzsa proved a slightly more general theorem.
Namely, if $r(m)$ denotes the maximum number of residue classes modulo $m$
such that the difference between any two residue classes is not a square
modulo $m$, then the following holds.

\bigskip\noindent\textbf{Theorem C.}
  \textit{For all squarefree $m$ we have}
  \[
  D(x)\ge\frac{1}{m}x^{\gamma (m)},
  \]
  \textit{where}
  \[
  \gamma (m) =\frac{1}{2}+\frac{\log r(m)}{2\log m}.
  \]

\noindent Ruzsa noticed that
\begin{equation}
r(65)\ge 7.\lb{r65}
\end{equation}
Namely, if we consider the pairs
\[
(0, 0),\ (0, 2),\ (1, 8),\ (2, 1),\ (2, 3),\ (3, 9),\ (4, 7)
\]
where we assign to the pairs of numbers the residue class modulo $65$,
whose residue modulo $5$ is the first member of the pair,
while the residue $13$ is the second member of the pair,
then for these residue classes mod $65$, there are no two elements in the
set whose difference is a square modulo $65$, since the difference between any
two elements mod $5$ or mod $13$ is a quadratic non-residue.
% 19. oldal
This proves \eqref{r65}. Later, Lewko \cite{Lewko} gave a similar
construction with modulus $205$, proving that
\[
r(205)\ge 12.
\]
The only question is whether the proof of Theorem C can be transferred
to the case where the set can only contain prime numbers.
We will prove the following.

\begin{lem}\lb{lem07}
  For all $x\ge 17$ and squarefree $m$ we have
  \[
  D_P(x)\ge\frac{x^{\gamma(m)}}{m\log x},
  \]
  \textit{where}
  \[
  \gamma (m) =\frac{1}{2}+\frac{\log r(m)}{2\log m}.
  \]
\end{lem}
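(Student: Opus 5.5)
The plan is to transfer Ruzsa's construction to primes by a translation-averaging argument. It rests on one observation: the property ``no two elements differ by a square'' is invariant under translation. If $\mathcal A\subseteq\{1,\dots,x\}$ has no square differences, then neither has $\mathcal A+c$ for any integer $c$. So it suffices to find a shift $c$ for which $(\mathcal A+c)\cap[1,x]$ contains many primes.

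\emph{Step 1 (Ruzsa's set).} Recall the construction behind Theorem C. Let $R$ be a set of $r(m)$ residues mod $m$ with no two differing by a square mod $m$. Take $\mathcal A$ to be the integers $n=\sum_i d_i m^i$ with $0\le d_i<m$ in a suitable range, with the digits $d_{2i}$ lying in $R$ and the odd-position digits arbitrary. Suppose $a\neq b$ are in $\mathcal A$ and let $j$ be the lowest position where their digits differ.
\begin{itemize}
\item If $j$ is odd, then $m^j\parallel a-b$. Since $m$ is squarefree, each prime $p\mid m$ divides $a-b$ to the odd power $j$, so $a-b$ is not a square.
\item If $j$ is even and $a-b=s^2$, then $m^{j/2}\mid s$, and $(a-b)/m^j\equiv d_j-d'_j \pmod m$ would be a square mod $m$, which is impossible.
\end{itemize}
Counting the admissible digit strings with $\mathcal A\subseteq[1,x]$ gives $|\mathcal A|\ge x^{\gamma(m)}/m$. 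I will take this bound directly from Theorem C (or from its proof), applied to a set inside $[1,x]$.

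\emph{Step 2 (averaging over shifts).} Count the pairs $(a,c)$ with $a\in\mathcal A$, $c\in\mathbb Z$ and $a+c$ a prime in $[1,x]$. For each fixed $a$ there are exactly $\pi(x)$ such $c$. By \eqref{primszamtetel}, valid since $x\ge 17$, this is $>x/\log x$. All these $c$ lie in the range $-x<c<x$, so there are fewer than $2x$ of them. Hence some shift $c$ satisfies
\[
|(\mathcal A+c)\cap\mathcal P\cap[1,x]|\ \ge\ \frac{|\mathcal A|\,\pi(x)}{2x}\ \ge\ \frac{|\mathcal A|}{2\log x}.
\]
This set consists of primes in $[1,x]$ with no square differences.

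\emph{Step 3 (constants) --- the expected obstacle.} The naive averaging loses a factor $2$ against the claimed $\frac{x^{\gamma(m)}}{m\log x}$. I would first try to recover it by restricting the range of shifts. Using Ruzsa's digit freedom, I would build $\mathcal A$ inside a shorter interval $[1,y]$ while keeping $|\mathcal A|\ge y^{\gamma(m)}/m$. For example, take $y$ to be the largest suitable power of $m$, or let the top free digit run only over half its range. Then shifts $c\in[0,x-y]$ keep $\mathcal A+c$ inside $[1,x]$. The averaging then compares $\sum_a(\pi(a+x-y)-\pi(a))$ against $x-y$ shifts, which costs essentially nothing when $y=o(x)$. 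The price is $y^{\gamma}$ in place of $x^{\gamma}$.

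If that trade-off does not close exactly, the fallback is to check whether the proof of Theorem C has slack, since its digit count is typically larger than $x^{\gamma(m)}/m$ by a factor up to $m$. That slack should comfortably absorb the factor $2$.

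Balancing these explicit constants is the step I expect to require care. The structural argument in Steps 1--2 is straightforward.
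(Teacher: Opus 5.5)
\textbf{There is a genuine gap: your proposal proves only half the claimed bound.} Steps 1--2 are fine as far as they go. There is one slip: $m^j\parallel a-b$ does not make \emph{every} prime of $m$ divide $a-b$ exactly $j$ times. What is true is that some $p\mid m$ has $p\nmid d_j-d_j'$, so $v_p(a-b)=j$ is odd.

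But Steps 1--2 only give $D_P(x)\ge x^{\gamma(m)}/(2m\log x)$, while the lemma asserts the constant $1/(m\log x)$. Step~3, which should recover the factor $2$, is not an argument, and neither suggested route works as stated.
\begin{itemize}
\item Route (a): Theorem C applied on $[1,y]$ only gives $y^{\gamma(m)}/m<x^{\gamma(m)}/m$. Averaging over $c\in[0,x-y]$ multiplies this by roughly the prime density $1/\log x$, so you have traded the factor $2$ for a factor $(y/x)^{\gamma(m)}$. Moreover, for $y$ close to $x$ the windows $[a,a+x-y]$ are short, and $\pi(X)>X/\log X$ gives no lower bound for $\pi(a+x-y)-\pi(a-1)$.
\item Route (b): the slack in Ruzsa's digit count is not uniform in $x$ and $m$. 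For $m=3$ one has $r(3)=1$ and $\gamma(3)=1/2$. For $n$ even and $x$ just below $3^{n+1}$, the digit set (even digits fixed, odd digits free) has $3^{n/2}$ elements against $x^{1/2}/3\approx 3^{(n-1)/2}$. That is a ratio of only $\sqrt3<2$.
\end{itemize}

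\textbf{The missing idea is to average over digit-wise translates rather than integer translates.} Integer translates $\mathcal A+c$ leave $[1,x]$, and that is what costs the factor $2$. Digit-wise translates instead permute $[1,m^n]$.

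Take $m^n\le x<m^{n+1}$ and let $K=\lceil n/2\rceil$ be the number of even positions in $\{0,\dots,n-1\}$. For every tuple $(i_0,i_2,\dots)$ of residues mod $m$, the paper takes the set $S(i_0,i_2,\dots)$ of integers $1+\sum_{j<n}r_jm^j$, where $r_j\in R+i_j \pmod m$ for even $j$ and $r_j$ is arbitrary for odd $j$.
\begin{itemize}
\item Your digit argument shows each of these $m^K$ sets has no square differences.
\item A given digit lies in $R+i$ for exactly $r(m)$ values of $i$. Hence every $s\in[1,m^n]$ lies in exactly $r(m)^K$ of the sets.
\end{itemize}
Pigeonhole then yields one $S$ with at least $r(m)^K\pi(m^n)/m^K$ primes. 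Using $\pi(m^n)\ge m^n/\log(m^n)\ge m^n/\log x$, $K\le (n+1)/2$ and $r(m)\le m$,
\[
\frac{r(m)^K\pi(m^n)}{m^K}\ \ge\ \frac{m^{\,n+K(\log r(m)/\log m-1)}}{\log x}\ \ge\ \frac{(m^{n+1})^{\gamma(m)}}{m\log x}\ >\ \frac{x^{\gamma(m)}}{m\log x}.
\]

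Your transference $D_P(x)\ge D(x)\pi(x)/(2x)$ is more general, since it works for any square-difference-free set, but it inherently loses a factor $2$. The paper instead exploits the digit structure to make the averaging exact, and never needs Theorem C itself.
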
  
Using Lemma \ref{lem07} with $m=205$ and $r(205)\ge 12$,
we immediately obtain the statement of Theorem \ref{thm05}.
Next, the proof of Lemma \ref{lem07} follows.

\bigskip\noindent\textbf{Proof of Lemma \ref{lem07}.}
Let $R\subset \mathbb Z_m$ be a set such that the difference between any two
elements is not a square modulo $m$ and $|R|=r(m)$.
If $r\in\mathbb Z_m$, then let $\overline{r}$ denote
the smallest nonnegative integer that represents $r$ among integers, i.e.,
\[
r\equiv\overline{r}\pmod{m},\ \ \ 0\le \overline{r}<m.
\]
If $0\le a<m$, define $R_a$ by the following set:
\[
  R_a\stackrel{\textup{def}}{=}
  \{\overline{r+a}:\ r\in R\}\subset \{0,1,2,\dots,m-1\}.
\]
Then, the difference between two elements in $R_a$ is never a square
modulo $m$, since the difference between elements in $R_a$ modulo $m$
is the same as the difference between the corresponding elements in $R$.

% 20. oldal
Let $n = \lfloor \log_m x \rfloor$, so that
\begin{equation}
m^n\le x<m^{n+1}. \label{5mn}
\end{equation}
We consider the $k$-tuples $(i_0, i_2, \dots, i_{t})$, where $t$ is the largest even integer less than $n$, and $k = \lfloor n/2 \rfloor + 1$ denotes the number of coordinates. For each such tuple, where $0 \le i_{2j} < m$ are integers, we define the set $S(i_0, i_2, \dots, i_t)$ as the collection of integers that can be written in the form
\[
s = \sum_{j=0}^{n-1} r_j m^j + 1,
\]
where $r_j \in R_{i_j}$ if $j$ is even, and $0 \le r_j < m$ is an arbitrary integer if $j$ is odd.
We will prove that the difference between any two elements in $S(i_0, i_2, \dots, i_t)$ cannot be a square.
Suppose that $s-s'=u^2$ , where $s,s'\in S(i_0,i_2,\dots,i_t)$. Let
\[
s=\sum_{j=0}^{n-1} r_j m^j+1,\ \ \ s'=\sum_{j=0}^{n-1} r_j' m^j+1.
\]
Let $k$ denote the smallest index $k$ for which
$r_k\ne r_k'$. Then:
\[
u^2=s-s'=(r_k-r_k')m^k+zm^{k+1}.
\]
If $k$ is odd, then $m^k\mid u^2$, but $m^{k+1}\nmid u^2$, which is
impossible for squarefree $m$.
If $k$ is even, let $k=2\ell$, and then
\[
\zj{u/m^{\ell}}^2\equiv r_k-r_k'\pmod{m}, \ \ \ \ \textup{where }r_k,\ r_k'
\in R_{i_k},
\]
which contradicts the fact that the difference between any
two elements in $R_{i_k}$ is not a square modulo $m$.

For every integer $1\le s\le m^n$, $s-1$
can be written in the $m$ number system:
\[
s=(s-1)+1=\sum_{j=0}^{n-1} r_jm^j+1.
\]
% 21. oldal
For a fixed even $j$, there exist exactly $r(m)$ values of $i$ such that $0 \le i < m$ and $r_j \in R_i$.
That is, for a given integer $1\le s\le m^n$, there exist a total of
$r(m)^{[(n+1)/2]}$ pieces of $[(n+1)/2]$-tuples
$(i_0,i_2,i_4,\dots,i_t)$ such that
\[
s\in S(i_0,i_2,\dots,i_t).
\]
This is true for all integers $1\le s\le m^n$, including the primes.
Thus, 
\begin{align}
  |\left\{ (p, (i_0,i_2,\dots,i_t))\right.
  &:\left. \ 1\le p\le m^n
  \textup{ prime,}\   p\in S(i_0,\dots,i_t),\ 0\le i_j<m,\right\}|
  \lb{szamn}\\
&=r(m)^{[(n+1)/2]} \pi(m^n).\notag
\end{align}
Since the number of $k$-tuples $(i_0,i_2,\dots,i_t)$ with $0 \le i_j < m$ is exactly $m^k$, according to the pigeon-hole principle, there exists a $k$-tuple $(i_0,i_2,\dots,i_t)$ such that $S(i_0,i_2,\dots,i_t)$ contains at least
\[
\dfrac{r(m)^k \pi(m^n)}{m^k}
\]
primes.
If we prove that
\begin{align}
\dfrac{r(m)^{[(n+1)/2]}\pi (m^n)}{m^{[(n+1)/2]}}\ge
\dfrac{x^{\gamma(m)}}{m\log x},\lb{5ut}
\end{align}
then the number of primes in this set $S(i_0,i_2,\dots,i_t)$ is $\ge \dfrac{x^{\gamma(m)}}{m\log x}$, and the difference between any
two primes is not a square number, which proves the
statement of Lemma \ref{lem07}. So, we only need to prove \eqref{5ut}.
In \cite[Corollary 1]{Rosser}, Rosser and Schoenfeld
proved that
for $x\ge 17$, $\pi(x) > \dfrac{x}{\log x}$. So,
\begin{align*}
  \dfrac{r(m)^{[(n+1)/2]}\pi (m^n)}{m^{[(n+1)/2]}}
  &\ge \dfrac{r(m)^{[(n+1)/2]}m^n}{m^{[(n+1)/2]}\log (m^n)}.
  \intertext{By \eqref{5mn},}
  \dfrac{r(m)^{[(n+1)/2]}\pi (m^n)}{m^{[(n+1)/2]}}
  &\ge \dfrac{r(m)^{[(n+1)/2]}m^n}{m^{[(n+1)/2]}\log x}\\
  &=\dfrac{1}{\log x} m^{[(n+1)/2]\zj{\log r(m)/ \log m -1}+n}.  
    \intertext{Since by $\log r(m)/ \log m -1$ is negative,}   
  \dfrac{r(m)^{[(n+1)/2]}\pi (m^n)}{m^{[(n+1)/2]}}&\ge \dfrac{1}{\log x}
  m^{ (n+1)\zj{\log r(m)/ \log m -1}/2+n}\\
  &=\dfrac{1}{m\log x}
    m^{(n+1)\log r(m)/(2\log m)+(n+1)/2}.
 %22. oldal   
    \intertext{Thus, by \eqref{5mn},}
    \dfrac{r(m)^{[(n+1)/2]}\pi (m^n)}{m^{[(n+1)/2]}}
    &\ge\dfrac{1}{m\log x}  x^{\log r(m)/(2\log m)+1/2}\\   
  &=\dfrac{x^{\gamma (m)}}{m\log x},
\end{align*}
which proves \eqref{5ut}. This completes the proof of the lemma. \qed

\section{Proof of Theorem \ref{thm06}.}

Let us list the primes between $2$ and $x$ in increasing order:
\[
2=p_1<p_2<p_3<\dots<p_s\le x.
\]
Let
\begin{align}
  t &=[0.24 (\log x)^{1/4}] \lb{tdef},                \\ 
  r &= \dfrac{t^{1/3}(\log x)^{2/3}}{3.323^{1/3}}.\lb{rdef}  
\end{align}
We color the primes in the interval $[1,x]$ with three colors. The color
of the prime $2\le p_i\le x$ is red if there exists an integer
$1\le h\le r$ such that $p_i+h^2$ is also prime. The color of
the prime $2\le p_i\le x$ is yellow if
\[
p_{i+t}-p_i> r^2.
\]
The primes that are not colored red or yellow are colored
green. In the proof, we look for $t$ consecutive green primes.
If $p_{i},p_{i+1},\dots,p_{i+t-1}$ are $t$ consecutive green primes,
then for $0\le m< n\le t-1$,
\[
p_{i+m}-p_{i+n}
\]
is not a square. This is because if
%23. oldal
\[
p_{i+m}-p_{i+n}=h^2
\]
holds, then for $h\le r$, the color of $p_{i+n}$ is red, and if $h>r$, then
by $p_{i+t}-p_i>p_{i+m}-p_{i+n}=h^2>r^2$, the color of $p_i$ is yellow.
Next, we will use the following lemma:

\begin{lem}\label{lem08}
  For sufficiently large $x$, denote the number of red primes in the interval $[1,x]$ by $R$, and the number of yellow primes by $Y$. Then
  \begin{align}
    R &\le 6.646 \dfrac{rx}{(\log x)^2}, \label{lem08a}\\
    Y &\le \dfrac{tx}{r^2} \label{lem08b}.
  \end{align}  
\end{lem}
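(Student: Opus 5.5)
The plan is to prove the two inequalities separately. \eqref{lem08a} reduces to the computation already done in Lemma \ref{lem01}. \eqref{lem08b} is a double-counting argument on the gaps between consecutive primes.

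\textbf{Bound \eqref{lem08a}.} A red prime $p\le x$ satisfies: $p+h^2$ is prime for some integer $1\le h\le r$. So
\[
R\le \sum_{h=1}^{[r]} \ab{\{p\le x:\ p,\ p+h^2 \textup{ primes}\}}.
\]
If $h$ is odd, then $h^2$ is odd, and by parity the set has at most one element. These terms contribute at most $[r]$, which is $o\zj{rx/(\log x)^2}$.

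If $h$ is even, I apply Lemma \ref{lem02} with the even shift $h^2$. The uniformity of its $O$-term in the shift is essential here, since $h^2$ goes up to $r^2\asymp (\log x)^{4/3}t^{2/3}$. The key observation is that $\prod_{2<p\mid h^2}\frac{p-1}{p-2}=\prod_{2<p\mid h}\frac{p-1}{p-2}$, because $h$ and $h^2$ have the same prime divisors. Hence
\[
R< 8.0001\prod_{2<p}\zj{1-\frac{1}{(p-1)^2}}\sum_{\substack{h=2\\ 2\mid h}}^{[r]}\prod_{2<p\mid h}\frac{p-1}{p-2}\cdot\frac{x}{(\log x)^2}.
\]
This is exactly the sum bounded in the proof of Lemma \ref{lem01}. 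Reusing the estimates $0.6602$ for the Euler product and $1.2583r$ for the divisor sum gives $R<6.646\,rx/(\log x)^2$. Since $r\to\infty$ with $x$, the earlier argument applies verbatim for $x$ sufficiently large.

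\textbf{Bound \eqref{lem08b}.} Let $p_1<\dots<p_s\le x$ be the primes up to $x$, and write $g_j=p_{j+1}-p_j$. Each yellow prime $p_i$ contributes more than $r^2$ to
\[
\Sigma=\sum_{i}\zj{p_{i+t}-p_i}=\sum_i\sum_{j=i}^{i+t-1}g_j .
\]
Exchanging the order of summation, each gap $g_j$ is counted at most $t$ times. For the indices $i\le s-t$ this gives
\[
\sum_{i\le s-t}\zj{p_{i+t}-p_i}\le t(p_s-p_1)< tx,
\]
so the number of yellow primes among $p_1,\dots,p_{s-t}$ is less than $tx/r^2$.

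\textbf{Main obstacle.} The main difficulty I expect is the boundary: the at most $t$ primes $p_i$ with $i>s-t$, for which $p_{i+t}$ lies beyond $x$. I would handle them by a sharper count of the telescoping sum. A gap $g_j$ with $j<t$ is counted only $j$ times, not $t$, and a gap with $j>s-t$ is likewise counted fewer than $t$ times. This saved mass should absorb the $O(t)$ extra yellow primes, since $r^2$ is much smaller than $x$.

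If that bookkeeping turns out to be awkward, the fallback is cruder. I would count all $i\le s$, using $p_{s+t}\le x+t\,x^{0.6}$ from any known bound on prime gaps. This yields $Y\le tx(1+o(1))/r^2$, and the constants in the subsequent proof of Theorem \ref{thm06} leave enough room for such a factor.
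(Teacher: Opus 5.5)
\textbf{Bound \eqref{lem08a} and the main count.} Your proof of \eqref{lem08a} is the paper's: odd $h$ contribute $O(r)$, Lemma~\ref{lem02} is applied with shift $h^2$, $\prod_{2<p\mid h^2}=\prod_{2<p\mid h}$, and the numerics of Lemma~\ref{lem01} are reused. Your double count over the indices $i\le s-t$ is equivalent to the paper's argument, which splits the indices into the $t$ classes $j,t+j,2t+j,\dots$ and telescopes each class.

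\textbf{The boundary.} You are right that the boundary is the delicate point. The paper's telescoping silently drops it: the last term $p_{\ell t+j}$ of each class can be yellow because of $p_{(\ell+1)t+j}>x$, which is not in the telescoped sum. As written, the paper's argument therefore only gives $Y<tx/r^2+t$. Your proposed repair does not close this either. The mass you save is exactly
\[
t(p_s-2)-\sum_{i\le s-t}(p_{i+t}-p_i)=\sum_{i\le t}(p_i-2)+\sum_{s-t<i\le s}(p_s-p_i).
\]
To absorb up to $t$ boundary yellow primes, this plus $t(x-p_s)$ would have to be of order $tr^2\asymp t(\log x)^{3/2}$. It is not:
\begin{enumerate}
\item The first sum is $O(t^2\log t)$.
\item The second sum is at most $t(p_s-p_{s-t+1})$, and nothing forces $p_s-p_{s-t+1}$ to be of size $r^2$; typically it is about $t\log x\ll r^2$.
\item A boundary prime can be yellow purely because of gaps reaching beyond $x$, which this quantity never sees. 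That $r^2$ is much smaller than $x$ does not bear on it, and $t(x-p_s)$ can be tiny.
\end{enumerate}
Your fallback proves only $Y\le(1+o(1))tx/r^2$. This is harmless for Theorem~\ref{thm06}, whose constant is $1/(3\cdot 3.323^{2/3}\cdot 0.24^{1/3})\approx 0.2409>0.24$. But it is not the inequality \eqref{lem08b}.

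\textbf{The missing idea.} Let the non-yellow primes pay for the boundary. For $2\le i\le s-t$ all of $p_i,\dots,p_{i+t}$ are odd, so $p_{i+t}-p_i\ge 2t$ even when $p_i$ is not yellow. Let $Y_1$ be the number of yellow $p_i$ with $i\le s-t$. Your double count then sharpens to
\[
Y_1r^2+2t\,(s-t-1-Y_1)<tx,
\]
and hence
\[
Y\le Y_1+t<\frac{t\,(x-2\pi(x)+2t+2)}{r^2-2t}+t .
\]
A short computation shows the right-hand side is at most $tx/r^2$ as soon as $2tx+2r^2+r^4\le 2\pi(x)r^2$. This holds for all large $x$: $\pi(x)>x/\log x$, and $r^2/\log x\asymp(\log x)^{1/2}$ dominates $t\asymp(\log x)^{1/4}$.
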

Before proving the lemma, let us see how the theorem follows from it. We divide the interval $[1,x)$ into disjoint subintervals of the form $[a,b)$. We call such a subinterval \textit{red} if all primes within it are either yellow or red; otherwise, if a subinterval contains only green primes, it is called a \textit{green} interval.

The interval $[1,x)$ can be represented as a union of alternating disjoint red and green intervals. By our construction, the first interval starting from 1 is red (since the initial small primes do not satisfy the conditions for being green). Because red and green intervals alternate, the number of green intervals is at most the number of red intervals. Note that each red interval contains at least one red or yellow prime by definition. Thus, the total number of red and yellow primes, $R+Y$, provides an upper bound for the number of such intervals:
\[
R+Y \le 6.646 \frac{rx}{(\log x)^2} + \frac{tx}{r^2}.
\]
Using the prime number theorem in the form $\pi(x) > x/\log x$, the number of green primes $G$ satisfies:
\[
G \ge \pi(x) - (R+Y) \ge \frac{x}{\log x} - 6.646 \frac{rx}{(\log x)^2} - \frac{tx}{r^2}.
\]
% 24. oldal
Thus, we have at most
$6.646\phantom{\cdot}\dfrac{rx}{(\log x)^2}+\dfrac{tx}{r^2}$
green intervals, which contain more than
$\dfrac{x}{\log x}-6.646\dfrac{rx}{(\log x)^2}-\dfrac{tx}{r^2}$
green primes. So, by the pigeon-hole principle,
there exists a green interval containing at least
  \begin{align*}
    H\stackrel{\textup{def}}{=}\dfrac{\dfrac{x}{\log x}-6.646
      \dfrac{rx}{(\log x)^2}-\dfrac{tx}{r^2}}{6.646
      \phantom{\cdot}\dfrac{rx}{(\log x)^2}+\dfrac{tx}{r^2}}
  \end{align*}
  If this quantity is greater than $0.24 (\log x)^{1/4}$,
  the statement of the theorem is proved. Indeed,  
  \begin{align*}
    H=\dfrac{\dfrac{x}{\log x}}{6.646\cdot\dfrac{rx}{(\log x)^2}
    +\dfrac{tx}{r^2}}-1.
  \end{align*}
  Then, according to \eqref{rdef}, given as the definition of $r$
  \begin{align*}
    H=\dfrac{(\log x)^{1/3}}{3\cdot 3.323^{2/3} t^{1/3}}-1,
  \end{align*}  
  where by \eqref{tdef}, we have
  \begin{align*}
    H > 0.2408 \cdot (\log x)^{1/4},
  \end{align*}
   and this was to be proved. All that remains is to prove Lemma \ref{lem08}.
  
\bigskip\noindent\textbf{Proof of Lemma \ref{lem08}.}
By Lemma \ref{lem02}. for the number of red primes we have
\begin{align*}
    R &\le
    \sum_{\substack{h=2\\ 2\mid h}}^{[r]} 8
    \prod_{2<p}\zj{1-\dfrac{1}{(p-1)^2}}
    \prod_{2<p\mid h^2}\dfrac{p-1}{p-2}
    \cdot \dfrac{x}{(\log x)^2}
    \zj{1+O\zj{\dfrac{\log\log x}{\log x}}}\\
      &+\sum_{\substack{h=1 \\ h\textup{ is odd}}}^{[r]} 1.
  %25. oldal
    \intertext{But $p\mid h^2$ holds if and only if $p\mid h$, thus}
    R&\le
    \sum_{\substack{h=2\\ 2\mid h}}^{[r]} 8
    \prod_{2<p}\zj{1-\dfrac{1}{(p-1)^2}}
    \prod_{2<p\mid h}\dfrac{p-1}{p-2}
    \cdot \dfrac{x}{(\log x)^2}
    \zj{1+O\zj{\dfrac{\log\log x}{\log x}}}\\
    &+\sum_{\substack{h=1 \\ h\textup{ is odd}}}^{[r]} 1.
\end{align*}
From here, we can perform the same calculations as in the proof of
Lemma \ref{lem02}, and we get
\[
R\le 6.646 \dfrac{rx}{(\log x)^2},
\]
which proves \eqref{lem08a}. In order to prove \eqref{lem08b},
consider the primes in $[1,x]$ in the following sequences
\begin{alignat*}{4}
  p_1,&&\ p_{t+1},&&\ p_{2t+1},&&\ \dots\\
  p_2,&& \ p_{t+2},&& \ p_{2t+2},&&\ \dots\\
  &&\vdots && && \\
  p_t,&&\ p_{2t},&&\ p_{3t},&&\ \dots\\
\end{alignat*}
Here, in the $j$-th sequence
\[
2 \le p_j, p_{t+j}, p_{2t+j}, \dots, p_{\ell t+j} \le x,
\]
where $\ell = \ell(j)$ is the largest integer such that $p_{\ell t+j} \le x$, there can be at most $\frac{x}{r^2}$ yellow primes. This follows from the fact that
\[
x > (p_{t+j}-p_j) + (p_{2t+j}-p_{t+j}) + (p_{3t+j}-p_{2t+j}) + \dots + (p_{\ell t+j}-p_{(\ell-1)t+j}),
\]
and in the above sum, at most $\frac{x}{r^2}$ differences can be greater than $r^2$ (otherwise their sum would exceed $x$). Since we have $t$ such sequences, the total number of yellow primes $Y$ is at most $\frac{tx}{r^2}$. This completes the proof of the lemma.
\qed

%26. oldal
\section{Appendix}

Here are the SAGE programs required in the proofs of
Theorems \ref{thm01}, \ref{thm03}, and \ref{thm06}.
I ran these codes on the following online platform: 
\begin{center}
\url{https://sagecell.sagemath.org/}
\end{center}

\bigskip

\begin{samepage}
\noindent ***\\  
\noindent\textbf{Program 1}
\begin{verbatim}
n=5000
prod( 1-1/((p-1)**2) for p in prime_range(3,n)).n()
\end{verbatim}
\textbf{Result:} $0.660175738989977$
\end{samepage}

\begin{samepage}
\bigskip\noindent ***\\  
\noindent\textbf{Program 2}
\begin{verbatim}
T=20000
sum(2/((p-2)*(p+1)) for p in prime_range(3,T+2)).n()
\end{verbatim}
\textbf{Result:} $0.727089417741948$
\end{samepage}

\begin{samepage}
\bigskip\noindent ***\\  
\noindent\textbf{Program 3}
\begin{verbatim}
T=5000
sum(moebius(d)**2/(d**2) for d in range(2, T+1,2)).n()
\end{verbatim}
\textbf{Result:} $0.303923082993008$
\end{samepage}

\begin{samepage}
\bigskip\noindent ***\\  
\noindent\textbf{Program 4}
\begin{verbatim}
T=5000
sum(moebius(d)**2/(2*d**2) for d in range(1, T+2,2)).n()
\end{verbatim}
\textbf{Result:} $0.607886600147474$
\end{samepage}

%27. oldal
\begin{samepage}
\bigskip\noindent ***\\  
\noindent\textbf{Program 5}
\begin{verbatim}
W=prod(p for p in prime_range(2,50))
P=Primes()
E=prod(P.unrank(j) for j in range(0,43))
exponent=(E*log(W,10)).n()
print(log(W,10).n())
print(exponent)
\end{verbatim}
\textbf{Result:} 17.7887972765829, 1.83383491155388e76
\end{samepage}

\end{document}